\newtheorem{thm}{Theorem}[section]
\newtheorem{cor}[thm]{Corollary}
\newtheorem{lem}[thm]{Lemma}
\newtheorem{prop}[thm]{Proposition}
\newtheorem{conj}[thm]{Conjecture}
\theoremstyle{definition}
\newtheorem{defn}[thm]{Definition}
\newtheorem{exm}[thm]{Example}
\theoremstyle{remark}
\newtheorem{remark}[thm]{Remark}
\newcommand{\kk}{\mathbb{K}}
\newcommand{\m}{\mathbf{m}}
\newcommand{\n}{\tilde{n}}
\newcommand{\nepsilon}{\tilde{\epsilon}}
\newcommand{\Z}{\mathbb{Z}}
\newcommand{\A}{\mathcal{A}}
\newcommand{\dev}{\mbox{DV}}
\title{Inequalities for free multi-braid arrangements}
\author[M.~DiPasquale]{Michael DiPasquale}     
\address{Michael DiPasquale\\     
	Department of Mathematics\\     
	Oklahoma State University\\     
	Stillwater\\
	OK \ 74078-1058\\     
	USA}     
\email{mdipasq@okstate.edu}
\urladdr{\url{http://math.okstate.edu/people/mdipasq/}}
\begin{document}

\begin{abstract}
We prove that, on a large cone containing the constant multiplicities, the only free multiplicities on the braid arrangement are those identified in work of Abe, Nuida, and Numata (2009).  We also give a conjecture on the structure of all free multiplicities on braid arrangements.
\end{abstract}

\maketitle

\section{Introduction}
Let $V\cong \kk^{\ell+1}$ be a vector space over a field $\kk$ of characteristic zero, $V^*$ its dual space and $S=\mbox{Sym}(V^*)\cong\kk[x_0,\ldots,x_\ell]$.  Given a polynomial $f\in S$ denote by $V(f)$ the zero-locus of $f$ in $V$.  The \textit{braid arrangement of type} $A_{\ell}\subset V$ is defined as $A_\ell=\cup_{0\le i<j\le\ell} H_{ij}$, where $H_{ij}=V(x_i-x_j)$.  A \textit{multiplicity} on $A_\ell$ is a map $\m:\{H_{ij}\}\rightarrow \Z_{>0}$; we will set $m_{ij}=\m(H_{ij})$.  The pair $(A_{\ell},\m)$ is called a multi-arrangement.  The multi-arrangement $(A_\ell,\m)$ is \textit{free} if the corresponding module $D(A_{\ell},\m)$ of multi-derivations (i.e., vector fields tangent to $A_\ell$ with multiplicities prescribed by $\m$) is a free module over the polynomial ring $\kk[x_0,\ldots,x_\ell]$. (See Section~\ref{sec:notation} for more details.)  If $(A_\ell,\m)$ is free, we say $\m$ is a \textit{free multiplicity}.  Free multiplicities on braid arrangements have been studied since the introduction of the module of logarithmic differentials by Saito~\cite{SaitoUniformization}, largely due to their importance in the theory of Coxeter arrangements and later in connection with a conjecture of Athanasiadis~\cite{AthDefCoxeter}.  Terao made a major breakthrough in~\cite{TeraoMultiDer}, showing that the constant multiplicity on any Coxeter arrangement is free and determining the corresponding exponents.  Subsequently, many authors studied freeness of `almost-constant' multiplicities on Coxeter and braid arrangements~\cite{TeraoDoubleCoxeter,TeraoMultiDer,YoshinagaPrimitiveDerivationMultiCoxeter,AbeQuasiConstant}.   In the setting of the braid arrangement, this line of inquiry resulted in a paper of Abe-Nuida-Numata~\cite{AbeSignedEliminable}, where the authors classify what we shall call \textit{ANN multiplicities}.  Given non-negative integers $n_0,\ldots,n_\ell$ and integers $\epsilon_{ij}\in\{-1,0,1\}$ for all $0\le i<j\le \ell$, an ANN multiplicity is a multiplicity satisfying
\begin{enumerate}
\item $m_{ij}=n_i+n_j+\epsilon_{ij}$ and
\item $m_{ij}\le m_{ik}+ m_{jk}+1$ for every triple $i,j,k$.
\end{enumerate}
In~\cite{AbeSignedEliminable} ANN multiplicities are classified as free if and only if a corresponding signed graph is \textit{signed-eliminable}; we will describe this precisely in \S~\ref{sec:SignedEliminable}.  We shall refer to the set of multiplicities satisfying the inequalities in $(2)$ as \textit{the balanced cone} of multiplicities.  The reason for this name will be explained in \S~\ref{sec:AnyBraidNonFree}.

In this note we prove that a multiplicity in the balanced cone is free if and only if it is a free ANN multiplicity.  This partially generalizes the recent classification of all free multiplicities on the $A_3$ braid arrangement~\cite{A3MultiBraid}, which is joint work of the author with Francisco, Mermin, and Schweig.  To state our result more concretely we shall associate to the multi-braid arrangement $(A_\ell,\m)$ an edge-labeled complete graph $(K_{\ell+1},\m)$.  The vertices of $K_{\ell+1}$ are labeled in bijection with the variables $x_0,\ldots,x_\ell\in S$.  An edge $\{v_i,v_j\}$ corresponds to $H_{ij}=V(x_i-x_j)$ and is furthermore labeled by $\m(H_{ij})=m_{ij}$.  Now suppose $C$ is a four-cycle in $K_{\ell+1}$ which traverses the vertices $v_i,v_j,v_s,v_t$ in order.  Define $\m(C)=|m_{ij}-m_{js}+m_{st}-m_{it}|$; since we take absolute value, $\m(C)$ is independent of orientation, depending only on the four cycle and the multiplicity.  Let $C_4(K_{\ell+1})$ be the set of all four cycles of $K_{\ell+1}$.  Given a subset $U\subset\{v_0,\ldots,v_\ell\}$ of size at least four, the \textit{deviation of }$\m$\textit{ over }$U$ is
\[
\dev(\m_U)=\sum_{\substack{C\in C_4(K_{\ell+1})\\ C\subset U}} \m(C)^2.
\]  
Our main result is the following.

\begin{thm}\label{thm:1}
Suppose $(A_\ell,\m)$ is a multi-braid arrangement with $\m$ in the balanced cone of multiplicities.  For a subset $U\subset\{v_0,\ldots,v_\ell\}$ let $\m_U=\{m_{ij}\mid \{v_i,v_j\}\subset U\}$ and denote by $q_U$ the number of integers $\{m_{ij}+m_{ik}+m_{jk}\mid \{v_i,v_j,v_k\}\subset U\}$ that are odd.  Then the following are equivalent.
\begin{enumerate}
\item $(A_\ell,\m)$ is free
\item $\dev(\m_U)\le q_U(|U|-1)$ for every subset $U\subset \{v_0,\ldots,v_\ell\}$ where $|U|\ge 4$.
\item $\m$ is a free ANN multiplicity.  In other words, there exist non-negative integers $n_0,\ldots,n_\ell$ and $\epsilon_{ij}\in\{-1,0,1\}$ (for $0\le i<j\le \ell$) so that 
\begin{enumerate}
\item $m_{ij}=n_i+n_j+\epsilon_{ij}$
\item the signed graph $G$ on $\{v_0,\ldots,v_\ell\}$ with $E_G^-=\{\{v_i,v_j\}: \epsilon_{ij}<0 \},E_G^+=\{\{v_i,v_j\}:\epsilon_{ij}>0\}$ is \textit{signed-eliminable} in the sense of ~\cite{AbeSignedEliminable}.
\end{enumerate}		
\end{enumerate}
\end{thm}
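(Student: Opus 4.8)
The plan is to prove the cycle of implications $(3)\Rightarrow(1)\Rightarrow(2)\Rightarrow(3)$. The step $(3)\Rightarrow(1)$ requires nothing new: by the classification in~\cite{AbeSignedEliminable}, a free ANN multiplicity is exactly one for which $D(A_\ell,\m)$ is free, so this is a matter of unwinding definitions. The substance of the argument lies in $(1)\Rightarrow(2)$, which produces the inequality, and in the combinatorial translation $(2)\Leftrightarrow(3)$ connecting it to signed‑eliminability.

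For $(1)\Rightarrow(2)$ I would first use that freeness localizes: for $U\subseteq\{v_0,\dots,v_\ell\}$, the localization of $D(A_\ell,\m)$ at the flat $\bigcap_{v_i,v_j\in U}H_{ij}$ is free, and it is precisely $D(A_{|U|-1},\m_U)$, with $\m_U$ again in the balanced cone. So it suffices to prove that a free multi‑braid arrangement $(A_n,\m')$ with $\m'$ balanced satisfies $\dev(\m')\le n\,q'$, where $n=|U|-1$ and $q'$ counts the odd triangle‑sums. Write $D(A_n,\m')=D_0\oplus S\partial$ with $\partial=\sum_i\partial_i$ the field along the center; then $D_0$ is free of rank $n$ with exponents $e_1,\dots,e_n$ summing to $|\m'|=\sum m_{ij}$, and its sheafification $\mathcal{D}_0$ on $\mathbb{P}^{n-1}$ splits as $\bigoplus_{i=1}^n\mathcal{O}(-e_i)$, so $c_1(\mathcal{D}_0)=-|\m'|$ and
\[
(n-1)c_1(\mathcal{D}_0)^2-2n\,c_2(\mathcal{D}_0)=\sum_{i<j}(e_i-e_j)^2\ge 0 .
\]
Independently, a computation of the Chern classes of the logarithmic sheaf — writing $c_2$ as a sum of local contributions over the rank‑$2$ flats, and using the known exponent formula for a rank‑$2$ multi‑arrangement of three lines — shows that \emph{on the balanced cone} the triangle flat $\{v_i,v_j,v_k\}$ contributes $\lfloor(m_{ij}+m_{ik}+m_{jk})^2/4\rfloor$ and the disjoint‑edge flats contribute $m_{ij}m_{kl}$. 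A bookkeeping expansion — each edge lies in $(n-1)(n-2)$ four‑cycles of $K_{n+1}$, each pair of adjacent edges in $n-2$, each pair of disjoint edges in $2$ — then yields the identity $\dev(\m')=n\,q'-2\bigl[(n-1)c_1(\mathcal{D}_0)^2-2n\,c_2(\mathcal{D}_0)\bigr]$, so that $\dev(\m')=n\,q'-2\sum_{i<j}(e_i-e_j)^2\le n\,q'$, which is $(2)$.

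For $(2)\Rightarrow(3)$ I would argue combinatorially. Applying the bound to four‑element sets $U$ and using the relation $\pm\m(C_1)\pm\m(C_2)\pm\m(C_3)=0$ among the three four‑cycles of $K_4$ forces every $\m(C)$ to be at most $2$ (one checks that $\m(C)=3$ would compel $q_U\ge 3$, which the relation then excludes). This is exactly the defect obstructing $\m$ from being a coboundary $n_i+n_j$: fixing a vertex and reading off $n_0$ from the values $m_{0i}+m_{0j}-m_{ij}$, which now lie in an interval of length $2$, produces non‑negative integers $n_i$ and $\epsilon_{ij}\in\{-1,0,1\}$ with $m_{ij}=n_i+n_j+\epsilon_{ij}$ (balancedness being used to keep the $n_i$ non‑negative). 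It remains to see the signed graph $G$ of the $\epsilon_{ij}$ is signed‑eliminable. Substituting the ANN form gives $\m(C)=|\epsilon_{ij}-\epsilon_{js}+\epsilon_{st}-\epsilon_{it}|$ and turns $q_U$ into the number of triangles of $G|_U$ carrying an odd number of signed edges; if $G$ were not signed‑eliminable it would contain one of the obstructions of~\cite{AbeSignedEliminable}, and I would check that each produces a set $U$ with $\dev(\m_U)>q_U(|U|-1)$. Running the same substitution in the other direction, an induction on $|U|$ that deletes an eliminable vertex shows a free ANN multiplicity always satisfies $(2)$, giving $(3)\Rightarrow(2)$ and closing the cycle.

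I expect the main obstacle to be the Chern‑class input to $(1)\Rightarrow(2)$: pinning down the local contribution of a triangle flat as $\lfloor(m_{ij}+m_{ik}+m_{jk})^2/4\rfloor$ precisely on the balanced cone — this is where the hypothesis is essential, since outside it the relevant rank‑$2$ three‑line localization is ``unbalanced'' and contributes a strictly smaller term, so that the clean identity $\dev=n\,q'-2\sum(e_i-e_j)^2$ breaks and the inequality can genuinely fail — and then recognizing that $(n-1)c_1^2-2n\,c_2$ is nothing but $n\,q'-\dev(\m')$. A secondary difficulty will be the last part of $(2)\Rightarrow(3)$: ruling out non‑signed‑eliminable subgraphs through the deviation bound means controlling infinite families of obstructions (such as unbalanced cycles of arbitrary length) rather than a finite list.
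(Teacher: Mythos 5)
Your overall route coincides with the paper's: (3)$\Rightarrow$(1) is \cite{AbeSignedEliminable}, and your Chern-class argument for (1)$\Rightarrow$(2) --- localize at the flat of $U$, split off $S\partial$, compare $\sum_{i<j}(e_i-e_j)^2=(n-1)c_1^2-2nc_2$ with a local evaluation of $c_2$ at the rank-two flats --- is exactly Terao's identity $\mbox{GMP}(2)=\mbox{LMP}(2)$ (Theorem~\ref{thm:GMP=LMP}) combined with Wakamiko's exponents (Proposition~\ref{prop:A2exponents}) and the same four-cycle bookkeeping, so that half is fine provided you either cite Theorem~\ref{thm:GMP=LMP} or actually prove the local formula for $c_2$. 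The genuine gap is in (2)$\Rightarrow$(3), at the point where you produce the ANN form. It is false that $\m(C)\le 2$ for all four-cycles forces the numbers $t_{jk}=m_{0j}+m_{0k}-m_{jk}$ into an interval of length $2$: the four-cycle bound controls $|t_{jk}-t_{jl}|$ only when the two pairs share a vertex. Concretely, on $A_4$ take $m_{01}=m_{02}=2N+1$, $m_{12}=2N-1$, and $m_{ij}=2N$ otherwise: this is balanced, every four-cycle has $\m(C)\le 2$, and it is even a free ANN multiplicity ($n_i=N$, signed triangle $+,+,-$, which is signed-eliminable), so by Corollary~\ref{cor:1to2} it satisfies condition (2) in full; yet $t_{12}=2N+3$ while $t_{34}=2N$. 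So ``reading off $n_0$'' fails (note also that the correct value $2n_0=2N$ sits at an end, not the middle, of the observed range, and different base vertices give inconsistent readings), and the real content --- producing all the $n_i$ simultaneously with every $\epsilon_{ij}\in\{-1,0,1\}$ and every $n_i\ge 0$ --- is the inductive adjustment algorithm of Proposition~\ref{prop:ANNmultiplicity}, where balancedness is what rescues the case $\n_\ell=0$. Your parenthetical justification of $\m(C)\le 2$ is also not right as stated: $q_U\ge 3$ is not ``excluded by the relation'' ($q_U=4$ occurs); the argument (Lemma~\ref{lem:K4restrictions}) is that $T_1+T_2=T_3$ forces a second cycle with $|T_i|\ge 2$, so $\dev(\m_U)\ge 13>12\ge 3q_U$.

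The other unproved piece is the obstruction check. The characterization you need is Nuida's (Theorem~\ref{thm:SignedEliminableCharacterization}, sharpened in Corollary~\ref{cor:SignedEliminableCharacterization} so that chordality becomes an induced $\sigma$-cycle condition), and for the infinite families --- $\sigma$-cycles, $\sigma$-mountains, $\sigma$-hills --- the inequality is a genuinely close call: the paper computes $\dev(G)=\ell^3-2\ell^2-\ell+2$ and $q_G=\ell^2-2\ell-3$, i.e.\ $\dev(G)=q_G\ell+2(\ell+1)$, by classifying all induced four-vertex subgraphs of each family (Table~\ref{tbl:Cycle} and Appendix~\ref{app:1}). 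Saying you ``would check that each produces such a $U$'' leaves precisely this, the bulk of Section~\ref{sec:SignedEliminable}, undone; you correctly flag it as a difficulty, but a proof needs the computation. (Your closing sentence deriving (3)$\Rightarrow$(2) separately is redundant once you have (3)$\Rightarrow$(1)$\Rightarrow$(2).)
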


\begin{remark}
Notice that $\dev(\m_U)=0$ if and only if $m_{ij}-m_{js}+m_{st}-m_{it}=0$ for every four-tuple $(v_i,v_j,v_s,v_t)$ of distinct vertices in $U$.  These equations cut out the linear space $L_U$ parametrized by $m_{ij}=n_i+n_j$ for $\{v_{i},v_{j}\}\subset U$.  Thus $\dev(\m_U)$ can be viewed as a measure of how far $\m_U$ is from the linear space $L_U$; which in turn measures how far $\m_U$ `deviates' from being an ANN multiplicity on the sub-braid arrangement corresponding to $U$.  This is why we call it the \textit{deviation} of $\m$ over $U$.
\end{remark}

The implication (3)$\implies$(1) in Theorem~\ref{thm:1} is the result of Abe-Nuida-Numata~\cite{AbeSignedEliminable}.  The quantity $\dev(\m_U)$ in Theorem~\ref{thm:1}.(2) arises from studying the local and global mixed products (introduced in~\cite{TeraoCharPoly}) of $(A_\ell,\m)$.  The main point of our note is to show that these `deviations' not only detect freeness in the balanced cone but also interact well with the notion of signed-eliminable graphs.

The structure of the paper is as follows.  In Section~\ref{sec:notation} we give some background on arrangements.  The proof of Theorem~\ref{thm:1} is split across sections~\ref{sec:AnyBraidNonFree} and~\ref{sec:SignedEliminable}.  The implication (1)$\implies$ (2) is proved in Theorem~\ref{thm:AnyBraidNonFree}.  We split the proof of (2)$\implies$ (3) into two parts.  The first part, establishing that $\m$ is an ANN multiplicity if the inequalities in (2) are satisfied, is Proposition~\ref{prop:ANNmultiplicity}.  The second part, showing that the inequalities in (2) detect when the associated signed graph is not signed-eliminable, is Proposition~\ref{prop:NotSignedEliminable}.  The final implication (3)$\implies$ (1) is proved in~\cite{AbeSignedEliminable}.  We finish in Section~\ref{sec:FreeVertex} by introducing the notion of a \textit{free vertex} and presenting a conjecture about the structure of all free multiplicities on braid arrangements.

\subsection{Examples}
We provide some computations using Theorem~\ref{thm:1}.  For the braid arrangement $A_\ell$, corresponding to the complete graph $K_{\ell+1}$, we label the vertices of $K_{\ell+1}$ by $v_0,\ldots,v_\ell$ and, given a multiplicity $\m$, we denote by $m_{ij}$ the value of $\m$ on the hyperplane $H_{ij}=V(x_i-x_j)$.  If $U\subset\{v_0,\ldots,v_\ell\}$, we denote by $A_U$ the corresponding sub-braid arrangement of $A_\ell$.

\begin{exm}\label{ex:A33path}
First we consider a family of multiplicities on the $A_3$ arrangement.  Given positive integers $s,t$, define the multiplicity $\m^3_{s,t}$ by $m_{01}=m_{12}=m_{23}=s$ and $m_{02}=m_{03}=m_{13}=t$ (this assigns different multiplicities along two edge-disjoint paths of length three).

The multiplicity $\m^3_{s,t}$ is in the balanced cone of multiplicities if and only if $s\le 2t+1$ and $t\le 2s+1$.  Assuming $\m^3_{s,t}$ is in the balanced cone of multiplicities, we now compute the deviation $\dev(\m^3_{s,t})$.  There are three four cycles: one of these has $\m(C)=|2s-2t|$ while the other two have $\m(C)=|s-t|$.  Hence $\dev(\m^3_{s,t})=6(s-t)^2$.

Now consider the sums $m_{ijk}$ around three cycles.  There are four such sums, two of the form $2s+t$ and two of the form $2t+s$.  So, applying Theorem~\ref{thm:1}, if $\m^3_{s,t}$ is in the balanced cone of multiplicities, it is free if and only if $6(s-t)^2\le 4\cdot 3,$ or $|s-t|\le 1$.  In fact, using the classification from~\cite{A3MultiBraid}, it follows that $\m^3_{s,t}$ is a free multiplicity if and only if $|s-t|\le 1$ (regardless of whether $\m^3_{s,t}$ is in the balanced cone or not).
\end{exm}

\begin{exm}\label{ex:A45cycles}
Next we consider a similar family of multiplicities on the $A_4$ braid arrangement.  Let $C_1$ be the five-cycle traversing the vertices $v_0,v_1,v_2,v_3,v_4,v_0$ in order and $C_2$ be the five-cycle traversing the vertices $v_0,v_2,v_4,v_1,v_3,v_0$ in order; $C_1$ and $C_2$ are edge-disjoint and every edge of $K_5$ is contained in either $C_1$ or $C_2$.  Given positive integers $s,t$ we define the multiplicity $\m^4_{s,t}$ by $\m^4_{s,t}|_{C_1}\equiv s$ and $\m^4_{s,t}|_{C_2}\equiv t$.

Any closed sub-arrangement of $(A_4,\m^4_{s,t})$ of rank three has the form $(A_3,\m^3_{s,t})$ considered in Example~\ref{ex:A33path}.  It follows that $(A_4,\m^4_{s,t})$ is not free if $|s-t|>1$.  So we consider the case when $|s-t|\le 1$.  If $|s-t|\le 1$ then $\m^4_{s,t}$ is in the balanced cone of multiplicities.  We compute $\dev(\m^4_{s,t})$ as follows.  A four-cycle of $K_5$ lies in a unique complete sub-graph on four vertices and each complete sub-graph on four vertices contains three such four-cycles.  As we saw in Example~\ref{ex:A33path}, one of these satisfies $\m(C)=|2s-2t|$ while the other two have $\m(C)=|s-t|$.  So the contribution to $\dev(\m^4_{s,t})$ from each complete sub-graph on four vertices is $6(s-t)^2$.  As there are five such sub-graphs, we have $\dev(\m^4_{s,t})=30(s-t)^2$.

Now we consider the sums $m_{ijk}$ around three cycles.  There are ten such sums, five of the form $2s+t$ and five of the form $2t+s$.  If $|s-t|=1$, then exactly one of $s,t$ is odd so there are precisely five sums around three cycles that are odd.  Hence $\dev(\m^4_{s,t})=30>4\cdot 5$ and $\m^4_{s,t}$ is not free by Theorem~\ref{thm:1}.  So we conclude that $\m^4_{s,t}$ is free if and only if $s=t$.

We also consider why $\m_{s,t}^4$ is not free when $|s-t|=1$ using the criterion of Abe-Nuida-Numata (which is the third statement of Theorem~\ref{thm:1}).  Without loss, suppose $t=s+1$ and let $n_i=\lceil s/2 \rceil$ for $i=0,1,2,3,4$.  If $s$ is even then $m_{ij}=n_i+n_j=s$ for $\{i,j\}\in C_1$ while $m_{ij}=n_i+n_j+1=s+1$ for $\{i,j\}\in C_2$.  In this case the graph $G$ on the vertices $v_0,v_1,v_2,v_3,v_4$ is the (positive) five-cycle given by $C_2$ and is hence not signed-eliminable by the characterization in~\cite{AbeSignedEliminable} (see also Corollary~\ref{cor:SignedEliminableCharacterization}).  Similarly, if $s$ is odd, then $G$ is the negatively signed five-cycle $C_1$.
\end{exm}

\begin{exm}\label{ex:A4NotDetectible}
The following example shows that criterion (2) in Theorem~\ref{thm:1} really does need to be checked on all proper subsets of size at least four.  Consider the $A_4$ arrangement with the multiplicity $\m$ defined by $m_{01}=m_{02}=m_{03}=m_{12}=m_{14}=1$, $m_{04}=m_{13}=m_{23}=m_{24}=2$ and $m_{34}=3$.  We can check that $\m$ lies in the balanced cone of multiplicities.

There are four odd sums around three cycles (so in the notation of Theorem~\ref{thm:1}, $q=4$).  Also, we compute $\dev(\m)=16$.  From Theorem~\ref{thm:1}, we cannot conclude that $(A_4,\m)$ is not free since $q\ell=16$ also in this case.  However, let us consider the $A_3$ sub-arrangement $A_U$ where $U=\{v_0,v_1,v_3,v_4\}$.  Let $\m_U$ be the restricted multiplicity; it also lies in the balanced cone of multiplicities on $A_U$.  All sums around three-cycles are even, and $\dev(\m_U)=8$.  Since $8>0$, it follows from Theorem~\ref{thm:1} that $(A_U,\m_U)$ is not free, hence $(A_4,\m)$ is also not free.
\end{exm}

\section{Notation and preliminaries} \label{sec:notation}
Let $V=\kk^{\ell}$ be a vector space over a field $\kk$ of characteristic zero.  A \textit{central hyperplane arrangement} $\A=\cup_{i=1}^n H_i$ is a union of hyperplanes $H_i\subset V$ passing through the origin in $V$.  In other words, if we let $\{x_1,\ldots,x_{\ell}\}$ be a basis for the dual space $V^*$ and $S=\mbox{Sym}(V^*)\cong \kk[x_1,\ldots,x_l]$, then $H_i=V(\alpha_{H_i})$ for some choice of linear form $\alpha_{H_i}\in V^*$, unique up to scaling.  We will use the language of graphic arrangements for referring to the braid arrangement $A_\ell$ and its sub-arrangements.  Namely, suppose $G=(V_G,E_G)$ is a graph with vertices ordered as $V_G=\{v_0,\ldots,v_\ell\}$, and let $S=\kk[x_0,\ldots,x_\ell]$.  If $\{v_i,v_j\}$ is an edge in $E_G$ then let $H_{ij}=V(x_i-x_j)$.  The graphic arrangement associated to $G$ is $\A_G=\cup_{\{i,j\}\subset E(G)} H_{ij}$.  Clearly $\A_G$ is a sub-arrangement of the full braid arrangement $\A_\ell$, which may be identified with the graphic arrangement corresponding to the complete graph $K_{\ell+1}$ on $(\ell+1)$ vertices.

A \textit{multi-arrangement} is a pair $(\A,\m)$ of a central arrangement $\A=\cup_{i=1}^k H_i$ and a map $\m:\{H_1,\ldots,H_k\}\rightarrow \Z_{\ge 0}$, called a multiplicity.  If $\m\equiv 1$, then $(\A,\m)$ is denoted $\A$ and is called a \textit{simple} arrangement.  If $\A_G$ is a graphic arrangement then the multi-arrangement $(\A_G,\m)$ is equivalent to the information of the edge-labeled graph $(G,\m)$, where $\{v_i,v_j\}$ is labeled by $\m(H_{ij})=m_{ij}$.  We will frequently move back and forth between these notations.  We will always assume that a graph $G$ comes with some ordering $V_G=\{v_0,\ldots,v_\ell\}$ of its vertices and may refer to the vertices simply by their integer labels $\{0,\ldots,\ell\}$.

The \textit{module of derivations} on $S$ is defined by $\mbox{Der}_{\kk}(S)=\bigoplus_{i=1}^{\ell} S\partial_{x_i}$, the free $S$-module with basis $\partial_{x_i}=\partial/\partial x_i$ for $i=1,\ldots,\ell$.  The module $\mbox{Der}_\kk(S)$ acts on $S$ by partial differentiation.  Given a multi-arrangement $(\A,\m)$, our main object of study is the module $D(\A,\m)$ of \textit{logarithmic derivations} of $(\A,\m)$:
\[
D(\A,\m):=\{\theta\in \mbox{Der}_\kk(S): \theta(\alpha_H)\in\langle \alpha_H^{\m(H)}\rangle \text{ for all }H\in\A \},
\]
where $\langle \alpha_H^{\m(H)} \rangle\subset S$ is the ideal generated by $\alpha_H^{\m(H)}$.  If $D(\A,\m)$ is a free $S$-module, then we say $(\A,\m)$ is free or $\m$ is a free multiplicity of the simple arrangement $\A$.  For a simple arrangement, $D(\A,\m)$ is denoted $D(\A)$; if $D(\A)$ is free we say $\A$ is free.

The \textit{intersection lattice} of $\A$ is the ranked poset $L=L(\A)$ consisting of all intersections of hyperplanes of $\A$ ordered with respect to reverse inclusion (the vector space $V$ is included as the `empty' intersection).  We denote by $L_k$ the intersections of rank $k$, where the rank of an intersection is its codimension.  If $X\in L_k$, $\A_X$ denotes the sub-arrangement consisting of hyperplanes which contain $X$, $L_X$ denotes the lattice of $\A_X$, and $\m_X$ denotes the multiplicity function restricted to hyperplanes containing $X$.  If $\A_G$ is a graphic arrangement with lattice $L$ and $H\subset G$ is a connected induced sub-graph of $G$ on $(k+1)$ vertices, then $H$ corresponds to an intersection $X(H)\in L_k$, and the graphic arrangement $\A_H$ is the same as $(\A_G)_{X(H)}$.  In this setting, if $\m$ is a multiplicity on $\A_G$, we denote by $\m_H$ the restriction of $\m$ to the sub-arrangement $\A_H$.

\begin{prop}\cite[Proposition~1.7]{EulerMult}\label{prop:FreeClosed}
If $(\A,\m)$ is a free multi-arrangement, then so is $(\A_X,\m_X)$.
\end{prop}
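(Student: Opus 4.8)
The plan is to deduce freeness of $D(\A_X,\m_X)$ from freeness of $D(\A,\m)$ by checking local freeness at every prime of $S$ and exploiting the fact that $D(\A_X,\m_X)$ is ``constant along $X$''. The basic observation is that for \emph{any} prime $\mathfrak p\subset S$ one has $D(\A,\m)_{\mathfrak p}=D(\A_{\mathfrak p},\m)_{\mathfrak p}$, where $\A_{\mathfrak p}=\{H\in\A:\alpha_H\in\mathfrak p\}$ and $\m$ is restricted accordingly: for $H\notin\A_{\mathfrak p}$ the form $\alpha_H$ is a unit in $S_{\mathfrak p}$, so the defining condition $\theta(\alpha_H)\in\langle\alpha_H^{\m(H)}\rangle$ is automatic after localizing. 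Taking $\mathfrak p$ to be the linear prime $\mathfrak p_X$ of $X$ gives $\A_{\mathfrak p_X}=\A_X$, so freeness of $D(\A,\m)$ immediately yields freeness of $D(\A_X,\m_X)$ at the single prime $\mathfrak p_X$.

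To pass from this one local statement to global freeness I would use the product structure of $D(\A_X,\m_X)$. After a linear change of coordinates assume $X=V(x_1,\dots,x_r)$, so every hyperplane of $\A_X$ is cut out by a linear form in $x_1,\dots,x_r$ alone; write $S'=\kk[x_1,\dots,x_r]$ and let $\A_X'$ be the essential arrangement defined by these forms inside $\operatorname{Spec}S'$. Expanding a logarithmic derivation as a polynomial in $x_{r+1},\dots,x_\ell$ with coefficients in $\operatorname{Der}_\kk(S')$ yields a decomposition
\[
D(\A_X,\m_X)\;\cong\;\bigl(D(\A_X',\m_X)\otimes_{S'}S\bigr)\;\oplus\;\bigoplus_{i=r+1}^{\ell}S\,\partial_{x_i}.
\]
Since $S'\hookrightarrow S$ is faithfully flat and every module here is graded, $D(\A_X,\m_X)$ is free over $S$ if and only if $D(\A_X',\m_X)$ is free over $S'$; moreover, localizing the decomposition at $\mathfrak p_X=\langle x_1,\dots,x_r\rangle S$ and descending along the faithfully flat local map $S'_{\mathfrak m'}\to S_{\mathfrak p_X}$ shows that $D(\A_X',\m_X)$ is free at the homogeneous maximal ideal $\mathfrak m'=\langle x_1,\dots,x_r\rangle$ of $S'$. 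So it remains to show that an essential arrangement whose logarithmic derivation module is free at the origin is free at \emph{every} prime of $S'$.

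This last point I would prove by induction on $r=\operatorname{codim}X$, the cases $r\le 1$ being immediate. For a prime $\mathfrak q\neq\mathfrak m'$ of $S'$, the forms $\alpha_H$ ($H\in\A_X'$) lying in $\mathfrak q$ span a proper subspace, so by the opening observation $D(\A_X',\m_X)_{\mathfrak q}=D(\A_Z,\m_Z)_{\mathfrak q}$ for a flat $Z\supsetneq X$ of codimension $<r$; applying the inductive hypothesis to the flat of $\A$ corresponding to $Z$, together with the product decomposition above, shows $D(\A_Z,\m_Z)$ is free over $S'$ and hence so is its localization at $\mathfrak q$. Combined with the case $\mathfrak q=\mathfrak m'$ handled above, $D(\A_X',\m_X)$ is locally free at every prime of $S'$, hence projective, hence---being graded---free over $S'$; the decomposition then gives that $D(\A_X,\m_X)$ is free over $S$.

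I expect the main obstacle to be the descent from the \emph{single} local statement at $\mathfrak p_X$ to global freeness: freeness at one prime is nowhere near enough, and bridging the gap genuinely requires both the ``cylindrical'' decomposition of $D(\A_X,\m_X)$ and the inductive treatment of all the other localizations, where one must verify that each reduction step produces a flat of strictly smaller codimension so that the induction is well-founded. The remaining ingredients---faithful flatness of $S'\hookrightarrow S$, descent of projectivity, and the fact that graded projective modules over a polynomial ring are free---are standard; their interplay with the arrangement-theoretic localization identity is the crux.
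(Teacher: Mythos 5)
The paper gives no argument of its own here---it simply cites \cite{EulerMult}---so the comparison is with the standard localization proof found there (the multiarrangement version of Orlik--Terao, Theorem~4.37), and your proposal reproduces its core correctly: the identity $D(\A,\m)_{\mathfrak p}=D(\A_{\mathfrak p},\m)_{\mathfrak p}$ specialized at $\mathfrak p_X=I(X)$, together with the splitting $D(\A_X,\m_X)\cong\bigl(D(\A_X',\m_X)\otimes_{S'}S\bigr)\oplus\bigoplus_{i>r}S\,\partial_{x_i}$ and faithfully flat descent to $S'_{\mathfrak m'}$, is exactly the right skeleton. Where you diverge is the final third, and there your added machinery is unnecessary: your worry that freeness at the single prime $\mathfrak p_X$ is ``nowhere near enough'' is misplaced, because $D(\A_X',\m_X)$ is a finitely generated \emph{graded} $S'$-module, and for such a module freeness of the localization at the irrelevant maximal ideal $\mathfrak m'$ already forces freeness: a minimal graded free resolution remains minimal after localizing at $\mathfrak m'$, so $\operatorname{pd}_{S'}D(\A_X',\m_X)=\operatorname{pd}_{S'_{\mathfrak m'}}D(\A_X',\m_X)_{\mathfrak m'}=0$. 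This eliminates the induction on codimension, the prime-by-prime local-freeness check, and the appeal to ``graded projective implies free.'' If you do keep the induction, be careful how it is phrased: as literally stated (``an essential arrangement free at the origin is free at every prime''), the inductive hypothesis cannot be applied to $\A_Z$, since freeness of $D(\A_X',\m_X)_{\mathfrak m'}$ does not by itself give freeness of the essentialization of $(\A_Z,\m_Z)$ at \emph{its} origin---that is the proposition again. You implicitly repair this by returning to the globally free $(\A,\m)$ when invoking the hypothesis at $Z$ (which is legitimate, since $Z\in L(\A)$ has codimension $<r$), so the statement being inducted should be the proposition itself, for the fixed free $(\A,\m)$, over the codimension of the flat.
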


If $D(\A,\m)$ is free then it has $\ell$ minimal generators as an $S$-module whose degrees are an invariant of $D(\A,\m)$.  These degrees are called the \textit{exponents} of $(\A,\m)$ and we will list them as a non-increasing sequence $(d_1,\ldots,d_\ell)$.  Put $|\m|=\sum_{H\in L_1}\m(H)$.  Then $\sum_{i=1}^\ell d_i=|\m|$ (this follows for instance by an extension of Saito's criterion to multi-arrangements~\cite{ZieglerMulti}).  For a free multi-arrangement, define the $\mbox{k}$th \textit{global mixed product} by
\[
\mbox{GMP}(k)=\sum d_{i_1}d_{i_2}\cdots d_{i_k},
\]
where the sum runs across all $k$-tuples satisfying $1\le i_1<\cdots<i_k\le \ell$.  Furthermore, define the $k$th \textit{local mixed product} by
\[
\mbox{LMP}(k)=\sum_{X\in L_k} d^X_1d^X_2\cdots d^X_k,
\]
where $d^X_1,\ldots,d^k_X$ are the (non-zero) exponents of the rank $k$ sub-arrangement $\A_X$.  We make use of the following result for $k=2$.

\begin{thm}\cite[Corollary~4.6]{TeraoCharPoly}\label{thm:GMP=LMP}
If $(\A,\m)$ is free then $\mbox{GMP}(k)=\mbox{LMP}(k)$ for every $2\le k\le \ell$.
\end{thm}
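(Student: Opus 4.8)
The plan is to route both sides of the identity through the characteristic polynomial $\chi(\A,\m;t)$ of the multiarrangement, as developed in \cite{TeraoCharPoly}: $\mbox{GMP}(k)$ will be read off \emph{globally}, from the exponents of $(\A,\m)$ itself, while $\mbox{LMP}(k)$ will be read off \emph{locally}, from the exponents of the rank-$k$ localizations, and the equality will follow from the local--global compatibility of the coefficients of $\chi$.

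First I would recall the two structural inputs. Write $\chi(\A,\m;t)=\sum_{k=0}^\ell(-1)^kc_k(\A,\m)\,t^{\ell-k}$, a monic polynomial of degree $\ell=\dim V$ with $c_0=1$. The factorization theorem of \cite{TeraoCharPoly} states that if $(\A,\m)$ is free with exponents $(d_1,\ldots,d_\ell)$ then $\chi(\A,\m;t)=\prod_{i=1}^\ell(t-d_i)$; comparing coefficients identifies $c_k(\A,\m)$ with the $k$-th elementary symmetric function of the exponents, which is exactly $\mbox{GMP}(k)$. This settles the global half and reduces the theorem to showing $c_k(\A,\m)=\mbox{LMP}(k)$.

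For the local half I would use the deletion--restriction recursion that defines $\chi(\A,\m;t)$ (built on the Euler multiplicity of \cite{EulerMult}) to produce a Whitney-type expansion of $c_k(\A,\m)$ organized by the intersection lattice. The point is that each rank-$k$ flat $X\in L_k$ contributes through its localization $(\A_X,\m_X)$, which is again free by Proposition~\ref{prop:FreeClosed}; its characteristic polynomial therefore factors as $t^{\ell-k}\prod_{i=1}^k(t-d^X_i)$, whose leading local coefficient is the product $d^X_1\cdots d^X_k$. Summing these over $X\in L_k$ is precisely $\mbox{LMP}(k)$. Matching this expansion against the value of $c_k(\A,\m)$ computed globally is the content of the theorem.

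The hard part is exactly this local--global matching. In contrast to simple arrangements, $\chi(\A,\m;t)$ is not determined by the intersection lattice alone --- it depends on the Euler multiplicities carried by the restrictions --- so the cancellation of the contributions of flats of rank $<k$ (the analogue of M\"obius inversion on $L(\A)$) cannot be imported from lattice combinatorics and must be extracted from the recursion itself. I would carry this out by a double induction on $\dim V$ and on $\sum_{H}\m(H)$: deletion--restriction peels off one unit of multiplicity, replacing $(\A,\m)$ by strictly smaller multiarrangements, and in the free case Ziegler's restriction theorem \cite{ZieglerMulti} guarantees that the smaller arrangements remain free with exponents obtained from $(d_1,\ldots,d_\ell)$ in a controlled way, so that both the global coefficient $c_k$ and the local sum $\mbox{LMP}(k)$ obey the same recursion and agree at the base cases. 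Finally, for the only case used in this paper, $k=2$, the local pieces are rank-$2$ multiarrangements, which are automatically free with explicitly computable exponents; there the identity $c_2(\A,\m)=\mbox{LMP}(2)$ can be verified directly once the $k=2$ form of the expansion is in hand, giving a self-contained route to the instance actually needed.
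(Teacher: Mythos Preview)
The paper does not supply a proof of this theorem; it is quoted verbatim from \cite[Corollary~4.6]{TeraoCharPoly} and used as a black box. So there is no in-paper argument to compare your proposal to, only the original Abe--Terao--Wakefield proof.

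Your overall architecture---identify both $\mbox{GMP}(k)$ and $\mbox{LMP}(k)$ with the $k$th coefficient $c_k$ of the multiarrangement characteristic polynomial $\chi(\A,\m;t)$, the former via the factorization theorem and the latter via a local expansion over $L_k$---is exactly the route taken in \cite{TeraoCharPoly}. The global half (factorization $\Rightarrow c_k=\mbox{GMP}(k)$) is fine as you state it.

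The gap is in your proposed proof of the local half. You plan a double induction in which ``Ziegler's restriction theorem \cite{ZieglerMulti} guarantees that the smaller arrangements remain free.'' This does not work: lowering a multiplicity by one (deletion) does \emph{not} in general preserve freeness of a multiarrangement, and Ziegler's theorem concerns restriction to a hyperplane, not deletion; even the addition--deletion theorems of \cite{EulerMult} only transfer freeness under extra hypotheses that will typically fail somewhere along your induction. So you cannot keep both $c_k$ and $\mbox{LMP}(k)$ running through a freeness-preserving recursion.

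The fix, and what \cite{TeraoCharPoly} actually does, is to decouple the two halves. The local formula
\[
c_k(\A,\m)=\sum_{X\in L_k} c_k(\A_X,\m_X)
\]
(where on the right $c_k$ denotes the top nontrivial coefficient of the rank-$k$ localization) is established \emph{without} any freeness assumption, directly from the definition of $\chi$ via the Hilbert series of $D(\A,\m)$ and its compatibility with localization. Freeness enters only at the very end: it makes each $(\A_X,\m_X)$ free (Proposition~\ref{prop:FreeClosed}), and the factorization theorem applied \emph{locally} then rewrites $c_k(\A_X,\m_X)$ as $d_1^X\cdots d_k^X$. Combining this with the global factorization yields $\mbox{GMP}(k)=c_k=\mbox{LMP}(k)$. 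Your sketch for $k=2$ is essentially this argument, since rank-$2$ multiarrangements are automatically free; the issue is only with how you proposed to handle general $k$.
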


\section{Deviations and mixed products in the balanced cone}\label{sec:AnyBraidNonFree}
In this section we study the local and global mixed products of multiplicities in the balanced cone.  In particular, we prove the implication (1)$\implies$(2) of Theorem~\ref{thm:1}.  Recall the \textit{balanced cone} of multiplicities on a braid arrangement $A_\ell$ is the set of multiplicities satisfying the three inequalities $m_{ij}+m_{jk}+1\ge m_{ik}, m_{ij}+m_{ik}+1\ge m_{jk}$, and $m_{ik}+m_{jk}+1\ge m_{ij}$ for every triple $0\le i<j<k\le \ell$.  The following proposition (due to Wakamiko) explains why we call this the \textit{balanced} cone; it is because the exponents of every sub-$A_2$ arrangement are as balanced as possible.

\begin{prop}\cite{Wakamiko}\label{prop:A2exponents}
Suppose $H$ is a three-cycle on the vertices $i,j,k$ of the edge-labeled complete graph $(K_{\ell+1},\m)$.  Put $m_{ijk}=m_{ij}+m_{ik}+m_{jk}$.  If $\m$ is in the balanced cone of multiplicities then the (non-zero) exponents of $(\A_H,\m_H)$ are $(\lfloor m_{ijk}/2\rfloor,\lceil m_{ijk}/2 \rceil)$.
\end{prop}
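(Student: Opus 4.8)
The plan is to reduce the statement to Wakamiko's theorem \cite{Wakamiko} on the exponents of a rank-two multiarrangement on three lines. Restricting to the closed subarrangement $(\A_H,\m_H)$, we have a rank-two multiarrangement with $|\m_H|=m_{ij}+m_{ik}+m_{jk}=m_{ijk}$. Every rank-two multiarrangement is free of rank two \cite{ZieglerMulti}, so $D(\A_H,\m_H)$ is free with two exponents $d_1\ge d_2$, and by the degree formula $\sum_i d_i=|\m_H|$ for free multiarrangements \cite{ZieglerMulti} we get $d_1+d_2=m_{ijk}$. Hence $d_2\le\lfloor m_{ijk}/2\rfloor$ automatically, and the whole content is the reverse inequality $d_2\ge\lfloor m_{ijk}/2\rfloor$; equivalently, $(\A_H,\m_H)$ carries no nonzero logarithmic derivation of degree less than $\lfloor m_{ijk}/2\rfloor$.

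Pick coordinates on the rank-two quotient so that the three hyperplanes of $\A_H$ are $V(x),V(y),V(x-y)$ with multiplicities $a,b,c$ a permutation of $m_{ij},m_{ik},m_{jk}$, and put $s=a+b+c=m_{ijk}$. For $\theta=P\partial_x+Q\partial_y$ in $D(\A_H,\m_H)$ write $P=x^aA$ and $Q=-y^bB$; the condition $(x-y)^c\mid P-Q$ then reads $x^aA+y^bB+(x-y)^cC=0$ for some homogeneous $C$. This is a degree-preserving isomorphism of $D(\A_H,\m_H)$ with the syzygy module $\syz(x^a,y^b,(x-y)^c)$, where the degree of a syzygy $(A,B,C)$ is the common degree $a+\deg A=b+\deg B=c+\deg C$. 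So it suffices to show every nonzero such syzygy has degree at least $\lfloor s/2\rfloor$.

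Two cases are easy. If a nonzero syzygy has $A=0$, then $y^bB=-(x-y)^cC$, so $(x-y)^c\mid B$ by coprimality and the syzygy has degree $\ge b+c$; and $b+c\ge\lfloor s/2\rfloor$ holds exactly because the balanced-cone inequality $a\le b+c+1$ does (symmetrically for $B=0$ and $C=0$). If instead $a\ge b+c$ (or the analogue for $b$ or $c$), then $a\ge\lceil s/2\rceil$, while any nonzero syzygy with $A\ne 0$ has degree $\ge a$ and one with $A=0$ was handled above. The remaining case, which carries all the difficulty, is that $a<b+c$, $b<a+c$, $c<a+b$ and $(A,B,C)$ has all components nonzero. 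Then $x^aA$, $y^bB$, $(x-y)^cC$ are binary forms of a common degree $e$ summing to $0$, hence span a pencil; dividing by their gcd $G$ gives $h_1+h_2+h_3=0$ with $\gcd(h_1,h_2,h_3)=1$, and since each $h_i=-(h_j+h_k)$ the forms $h_1,h_2,h_3$ are in fact pairwise coprime. One combines this with the divisibilities $x^a\mid Gh_1$, $y^b\mid Gh_2$, $(x-y)^c\mid Gh_3$ and the equality $\deg h_i=e-\deg G$ to bound $e$ from below; cleaner yet is to exhibit a Hilbert--Burch matrix for $(x^a,y^b,(x-y)^c)$ with column degrees $\lceil s/2\rceil,\lfloor s/2\rfloor$, equivalently logarithmic derivations $\theta_1,\theta_2$ of those degrees whose Saito determinant is a scalar times $x^ay^b(x-y)^c$, so that $\theta_1,\theta_2$ form a basis by Saito's criterion. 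This construction — ruling out more unbalanced exponents once the triangle inequalities hold — is the main obstacle, and it is precisely what Wakamiko carries out.
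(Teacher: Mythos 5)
The paper offers no proof of this proposition at all---it is quoted directly from \cite{Wakamiko}---and your write-up ultimately rests on the same citation, since you defer the decisive case (strict triangle inequalities, all syzygy components nonzero) to Wakamiko; so the two treatments are essentially the same. Your added scaffolding is correct as far as it goes---passing to the rank-two quotient with hyperplanes $V(x),V(y),V(x-y)$, identifying $D(\A_H,\m_H)$ with $\syz(x^a,y^b,(x-y)^c)$ degree-by-degree, and settling the cases $A=0$ and $\max(a,b,c)\ge b+c$ via the balanced-cone inequality $a\le b+c+1$---but it does not replace the core of Wakamiko's argument, which both you and the paper cite.
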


If $\{i,j,k\}$ are vertices of $K_{\ell+1}$ so that $m_{ij}+m_{ik}+m_{jk}$ is odd then we will call $\{i,j,k\}$ an \textit{odd three-cycle}.

\begin{prop}\label{prop:LMPGMP}
Let $(A_\ell,\m)$ be a multi-braid arrangement so that $\m$ is in the balanced cone of multiplicities.  Set $|\m|=\sum_{ij}m_{ij}$ and $m_{ijk}=m_{ij}+m_{jk}+m_{ik}$.  If $q$ is the number of odd three cycles of $\m$, then
\[
\mbox{LMP}(2)=\sum\limits_{0\le i<j<k \le \ell} (m_{ijk}/2)^2+\sum_{\{i,j\}\cap\{s,t\}=\emptyset} m_{ij}m_{st}-q/4
\]
and
\[
\mbox{GMP}(2)\le \dbinom{\ell}{2}\dfrac{|\m|^2}{\ell^2}.
\]
\end{prop}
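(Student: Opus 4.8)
The plan is to compute $\mbox{LMP}(2)$ and $\mbox{GMP}(2)$ essentially independently, since they have rather different flavors. For $\mbox{LMP}(2)$, the rank-two intersections $X\in L_2$ of the braid arrangement split into two types: those corresponding to three-cycles $\{i,j,k\}$ in $K_{\ell+1}$ (where $\A_X$ has three hyperplanes), and those corresponding to pairs of disjoint edges $\{i,j\}$, $\{s,t\}$ (where $\A_X$ has two hyperplanes forming a product, i.e.\ a Boolean arrangement of rank two). For the second type the exponents are simply $(m_{ij},m_{st})$, contributing $\sum_{\{i,j\}\cap\{s,t\}=\emptyset} m_{ij}m_{st}$. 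For the first type, Proposition~\ref{prop:A2exponents} tells us the exponents are $(\lfloor m_{ijk}/2\rfloor,\lceil m_{ijk}/2\rceil)$, so the product of exponents is $(m_{ijk}/2)^2$ when $m_{ijk}$ is even and $(m_{ijk}^2-1)/4=(m_{ijk}/2)^2-1/4$ when $m_{ijk}$ is odd. Summing over all three-cycles and correcting by $-1/4$ for each of the $q$ odd three-cycles gives exactly the stated formula for $\mbox{LMP}(2)$. This part is a routine bookkeeping computation once the two types of rank-two flats are identified.

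For the $\mbox{GMP}(2)$ bound, recall $\mbox{GMP}(2)=\sum_{i<j} d_id_j$ where $(d_1,\ldots,d_\ell)$ are the exponents of $(A_\ell,\m)$, and $\sum_i d_i=|\m|$. The elementary identity $2\sum_{i<j}d_id_j=(\sum_i d_i)^2-\sum_i d_i^2=|\m|^2-\sum_i d_i^2$ reduces the problem to a lower bound on $\sum_i d_i^2$. By the Cauchy--Schwarz inequality (or the power-mean inequality), $\sum_{i=1}^\ell d_i^2\ge (\sum_i d_i)^2/\ell=|\m|^2/\ell$. Therefore $2\,\mbox{GMP}(2)\le |\m|^2-|\m|^2/\ell=|\m|^2(\ell-1)/\ell$, and so $\mbox{GMP}(2)\le \frac{\ell-1}{2\ell}|\m|^2=\binom{\ell}{2}\frac{|\m|^2}{\ell^2}$, which is the claimed inequality. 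Note this step does not even use that $\m$ lies in the balanced cone; it only uses freeness (to have exponents at all) and the constraint $\sum d_i=|\m|$ from Saito's criterion.

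I do not anticipate a serious obstacle here: both halves are short. The only point requiring a little care is verifying that the rank-two flats of the braid arrangement are precisely enumerated by three-cycles and disjoint pairs of edges of $K_{\ell+1}$ — equivalently, that when two edges of $K_{\ell+1}$ share a vertex, the flat $H_{ij}\cap H_{jk}$ they cut out automatically contains $H_{ik}$ as well (since $x_i=x_j$ and $x_j=x_k$ force $x_i=x_k$), so such a flat has rank two but is supported on a three-hyperplane sub-arrangement. Once that combinatorial description is in hand, the computation of $\mbox{LMP}(2)$ follows by splitting the sum and invoking Proposition~\ref{prop:A2exponents} for the three-cycle contributions, and the $\mbox{GMP}(2)$ bound is immediate from Cauchy--Schwarz. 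If one wishes, one can also record that equality in the $\mbox{GMP}(2)$ bound forces all exponents equal, which foreshadows how the gap between $\mbox{LMP}(2)$ and $\mbox{GMP}(2)$ will be leveraged to prove (1)$\implies$(2) of Theorem~\ref{thm:1}.
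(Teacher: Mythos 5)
Your proof of the $\mbox{LMP}(2)$ formula is exactly the paper's argument: classify the rank-two flats of $A_\ell$ as disjoint-edge pairs (Boolean, exponents $(m_{ij},m_{st})$) or three-cycles (exponents from Proposition~\ref{prop:A2exponents}, valid because $\m$ is in the balanced cone), and subtract $1/4$ for each odd three-cycle. For the $\mbox{GMP}(2)$ bound you take a genuinely different, and in fact more elementary, route. The paper invokes the remark after \cite[Corollary~4.6]{TeraoCharPoly} that ``more balanced'' exponent sequences have larger mixed products, and then explicitly computes $\sum_{i<j}b_ib_j$ for the most balanced integer sequence $b_1=\cdots=b_p=k+1$, $b_{p+1}=\cdots=b_\ell=k$ with $|\m|=k\ell+p$, obtaining $\binom{\ell}{2}\frac{|\m|^2}{\ell^2}-\frac{p(\ell-p)}{2\ell}$. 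You instead use $2\,\mbox{GMP}(2)=|\m|^2-\sum_i d_i^2$ together with Cauchy--Schwarz, $\sum_i d_i^2\ge |\m|^2/\ell$, which bounds $\mbox{GMP}(2)$ by the real-relaxation optimum $\binom{\ell}{2}\frac{|\m|^2}{\ell^2}$ directly. Both arguments are correct and both correctly note that this half only uses freeness (so that exponents exist and sum to $|\m|$), not the balanced cone; what the paper's integer-balancing computation buys, and your Cauchy--Schwarz shortcut does not, is the sharper correction term $-\frac{p(\ell-p)}{2\ell}$, which is exactly what is referenced in the remark after Theorem~\ref{thm:AnyBraidNonFree} as the strengthened inequality $\dev(\m)>q\ell-2p(\ell-p)$. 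Since the proposition as stated only asserts the weaker bound, your proof establishes it in full.
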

\begin{proof}
We prove the formula for $\mbox{LMP}(2)$ first.  If $X\in L_2$, then either $(1):X=H_{ij}\cap H_{st}$ for a pair of non-adjacent edges $\{i,j\}$ and $\{s,t\}$ or $(2):X=H_{ij}\cap H_{jk}\cap H_{ik}$ corresponds to a triangle.  In the first case the arrangement is boolean with (non-zero) exponents $(m_{ij},m_{st})$, contributing $m_{ij}m_{st}$ to $\mbox{LMP}(2)$.  In the second case the arrangement is an $A_2$ braid arrangement with exponents $(m_{ijk}/2,m_{ijk}/2)$ if $m_{ijk}$ is even and $((m_{ijk}-1)/2,(m_{ijk}+1)/2)$ if $m_{ijk}$ is odd from Proposition~\ref{prop:A2exponents}.  The former contributes $m_{ijk}^2/4$ to $\mbox{LMP}(2)$ while the latter contributes $m_{ijk}^2/4-1/4$.  This yields the expression for $\mbox{LMP}(2)=\sum_{X\in L_2} d^X_1d^X_2$.

Now consider the inequality for $\mbox{GMP}(2)$.  Supposing $(\A_\ell,\m)$ is free, let $(d_1,\ldots,d_\ell)$ be its (non-zero) exponents, ordered so that $d_1\ge\cdots\ge d_\ell$.  By an extension of Saito's criterion to multi-arrangements~\cite{ZieglerMulti}, $\sum_{i=1}^\ell d_i=|\m|$.  Now, following remarks just after~\cite[Corollary~4.6]{TeraoCharPoly}, we say that $(b_1,\ldots,b_\ell)$ with $b_1\ge\cdots\ge b_\ell$ and $\sum b_i=|\m|$ is `more balanced' than $(d_1,\ldots,d_\ell)$ if $\sum (b_{i+1}-b_i)\le \sum (d_{i+1}-d_i)$.  Then
\[
\sum b_{i_1}\cdots b_{i_k}\ge \sum d_{i_1}\cdots d_{i_k}=\mbox{GMP}(k).
\]
Let $|\m|=k\ell+p$ be the result of dividing $|\m|$ by $\ell$, so $k$ is a positive integer and $0\le p<\ell$.  The `most balanced' distribution of exponents occurs when
\[
b_1=\cdots=b_p=k+1 \qquad \mbox{and} \qquad b_{p+1}=\ldots=b_\ell=k,
\]
so $\sum (b_{i+1}-b_i)=b_{p-1}-b_p$ is zero if $p=0$ and one if $p>0$.  Some algebra yields that, for this choice of exponents, 
\[
\sum_{1\le i< j\le \ell} b_{i}b_{j}=\dbinom{\ell}{2}\dfrac{|\m|^2}{\ell^2}-\dfrac{p(\ell-p)}{2\ell}\le\dbinom{\ell}{2}\dfrac{|\m|^2}{\ell^2},
\]
which gives the result. \qedhere
\end{proof}

\begin{defn}\label{defn:Deviation}
Fix a multiplicity $\m$ on the braid arrangement $A_\ell$ and a subset $U\subset\{v_0,\ldots,v_\ell\}$.  Denote by $C_4(K_{\ell+1})$ the set of four-cycles in $K_{\ell+1}$ and by $C_4(U)$ the set of four-cycles in $K_{\ell+1}$ whose vertices are contained in $U$.  If $C\in C_4(K_{\ell+1})$ traverses the vertices $i,j,s,t$ in order, set  $\m(C)=|m_{ij}-m_{js}+m_{st}-m_{it}|$.  The \textbf{deviation of }$\m$ \textbf{over }$U$ is the sum of squares
\[
\dev(\m_U)=\sum_{C\in C_4(U)} \m(C)^2.
\]
If $U$ consists of all vertices of $K_{\ell+1}$, then we write $\dev(\m)$ instead of $\dev(\m_U)$.
\end{defn}

\begin{thm}\label{thm:AnyBraidNonFree}
Suppose $(A_\ell,\m)$ is a multi-braid arrangement, $\m$ is in the balanced cone of multplicities, and $q$ is the number of odd three cycles of $\m$.  If $\dev(\m)>q\ell$, then $\m$ is not free.
\end{thm}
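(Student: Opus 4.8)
The plan is to combine Theorem~\ref{thm:GMP=LMP} (which gives $\mathrm{GMP}(2)=\mathrm{LMP}(2)$ for any free multi-arrangement) with the explicit formula and bound for $\mathrm{LMP}(2)$ and $\mathrm{GMP}(2)$ obtained in Proposition~\ref{prop:LMPGMP}. Assume for contradiction that $(A_\ell,\m)$ is free. Then $\mathrm{GMP}(2)=\mathrm{LMP}(2)$, so by the two displays in Proposition~\ref{prop:LMPGMP} we get
\[
\sum_{0\le i<j<k\le\ell}(m_{ijk}/2)^2+\sum_{\{i,j\}\cap\{s,t\}=\emptyset}m_{ij}m_{st}-\frac{q}{4}\;\le\;\binom{\ell}{2}\frac{|\m|^2}{\ell^2}.
\]
The first and last steps are thus essentially bookkeeping: everything reduces to showing that the left-hand side minus $\binom{\ell}{2}|\m|^2/\ell^2$ is exactly (a positive multiple of) $\dev(\m)-q\ell$, up to rearrangement. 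So the technical heart is an algebraic identity, not an inequality.

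The key step is to expand $|\m|^2=\left(\sum_{i<j}m_{ij}\right)^2$ and organize the cross terms $m_{ab}m_{cd}$ by how the edges $\{a,b\}$ and $\{c,d\}$ sit inside $K_{\ell+1}$: they are either equal, share exactly one vertex, or are disjoint. Writing $\sum_i m_{ijk}^2/4 = \tfrac14\sum_{i<j<k}(m_{ij}+m_{ik}+m_{jk})^2$ and expanding, the "squared" terms $\sum m_{ij}^2$ and the "share-a-vertex" terms like $m_{ij}m_{ik}$ appear with certain multiplicities coming from how many triangles contain a given edge or a given path of length two; these same configurations appear (with different, $\ell$-dependent multiplicities) in the expansion of $|\m|^2/\ell^2$. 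The disjoint terms $\sum_{\{i,j\}\cap\{s,t\}=\emptyset}m_{ij}m_{st}$ that survive on the left must be matched against their counterpart in $|\m|^2$. The expected punchline is that, after collecting, the difference collapses to $\tfrac14\sum_{C\in C_4(K_{\ell+1})}(m_{ij}-m_{js}+m_{st}-m_{it})^2$ — i.e. $\tfrac14\dev(\m)$ — coming from the fact that for each four-element subset $\{i,j,s,t\}$ the quadratic form $\sum_{\text{triangles}}(\cdots)^2 + (\text{disjoint products}) - \tfrac{1}{\ell^2}(\cdots)$ restricted to that subset is exactly the sum of squares of the three four-cycle alternating sums. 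I would verify this by checking the coefficient of each orbit type ($m_{ij}^2$, $m_{ij}m_{ik}$, $m_{ij}m_{st}$ with disjoint edges) on both sides; because the deviation is built from squares of differences, the coefficients in $\dev(\m)$ of $m_{ij}^2$, $m_{ij}m_{ik}$, and $m_{ij}m_{st}$ are elementary counts of four-cycles through the corresponding configuration, and these should be reconcilable with the triangle-counts and the combinatorial coefficient $\binom{\ell}{2}/\ell^2$.

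The main obstacle I anticipate is purely this coefficient-matching computation: keeping careful track of how many four-cycles of $K_{\ell+1}$ contain a fixed edge, a fixed two-edge path, or a fixed pair of disjoint edges, and matching these against the triangle counts (each edge lies in $\ell-1$ triangles, each two-edge path in exactly one triangle) and against the expansion of $|\m|^2$. One subtlety to handle with care is the sign pattern $+{}-{}+{}-$ in $\m(C)$: a four-cycle on $\{i,j,s,t\}$ pairs "opposite" edges with the same sign and "adjacent" edges with opposite signs, so in $\sum_C\m(C)^2$ the diagonal terms $m_{ij}^2$ and the adjacent cross terms come with fixed signs, while the opposite-edge cross terms $-2m_{ij}m_{st}$ enter negatively — it is precisely this negative contribution that should cancel the positive disjoint-products sum left over from $\mathrm{LMP}(2)$ against $|\m|^2$. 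Once the identity
\[
\mathrm{LMP}(2)-\binom{\ell}{2}\frac{|\m|^2}{\ell^2}\;=\;\frac{1}{4}\bigl(\dev(\m)-q\ell\bigr) \;+\; (\text{a manifestly non-negative correction})
\]
is established, freeness forces the left side to be $\le 0$ via $\mathrm{GMP}(2)=\mathrm{LMP}(2)\le\binom{\ell}{2}|\m|^2/\ell^2$, hence $\dev(\m)\le q\ell$, contradicting the hypothesis $\dev(\m)>q\ell$. If a clean closed-form correction term is hard to pin down, a fallback is to run the same argument subset-by-subset, applying Proposition~\ref{prop:FreeClosed} to each four-element $U$ is too weak, so instead I would apply the global $\mathrm{GMP}(2)=\mathrm{LMP}(2)$ identity directly and absorb any slack into the already-present inequality $\mathrm{GMP}(2)\le\binom{\ell}{2}|\m|^2/\ell^2$.
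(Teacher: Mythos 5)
You take essentially the same route as the paper: combine Proposition~\ref{prop:LMPGMP} with Theorem~\ref{thm:GMP=LMP} and prove the sum-of-squares identity by matching coefficients of $m_{ij}^2$, adjacent products $m_{ij}m_{ik}$, and disjoint products $m_{ij}m_{st}$ against counts of four-cycles through each configuration; the paper's exact identity is $\mbox{LMP}(2)-\binom{\ell}{2}|\m|^2/\ell^2=\bigl(\dev(\m)-q\ell\bigr)/(4\ell)$, so your ``positive multiple'' is $1/(4\ell)$ and no correction term is needed. One small slip in your sketch: opposite (disjoint) edges of a four-cycle carry the same sign in $\m(C)$, so their cross terms enter $\dev(\m)$ positively (total coefficient $+4$) and the cancellation happens against the $-2(\ell-1)|\m|^2$ term rather than the other way around --- a detail your planned orbit-by-orbit coefficient check would catch.
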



\begin{remark}
The inequality $\dev(\m)>q\ell$ in Theorem~\ref{thm:AnyBraidNonFree} can be strengthened to $\dev(\m)>q\ell-2p(\ell-p)$, where $p$ is the remainder of $|\m|$ on division by $\ell$.  We will see that the simpler inequality $\dev(\m)>q\ell$ suffices to detect non-freeness.
\end{remark}

\begin{proof}
By Proposition~\ref{prop:LMPGMP}, we know that
\begin{multline*}
\mbox{LMP}(2)-\mbox{GMP}(2)\ge \\ \sum\limits_{0\le i<j<k \le \ell} (m_{ijk}/2)^2+\sum_{\{i,j\}\cap\{s,t\}} m_{ij}m_{st}-\dbinom{\ell}{2}\dfrac{|\m|^2}{\ell^2}-q/4.
\end{multline*}
Our primary claim is
\begin{equation}\label{eq:SOS}
4\ell\left(\sum\limits_{0\le i<j<k \le n} (m_{ijk}/2)^2+\sum_{\{i,j\}\cap\{s,t\}} m_{ij}m_{st}-\dbinom{\ell}{2}\dfrac{|\m|^2}{\ell^2}\right)=\dev(\m).
\end{equation}
Once Equation~\eqref{eq:SOS} is proved, notice that
\[
4\ell(\mbox{LMP}(2)-\mbox{GMP}(2))\ge \dev(\m)-q\ell.
\]
Then Theorem~\ref{thm:GMP=LMP} immediately yields Theorem~\ref{thm:AnyBraidNonFree}.  So we prove Equation~\eqref{eq:SOS}.  We first consider the right hand side, namely the sum $\dev(\m)$.  Since every edge of $K_{\ell+1}$ is contained in $2\binom{\ell-1}{2}$ four-cycles, every pair of disjoint edges is contained in two four-cycles, and every pair of adjacent edges is contained in $(\ell-2)$ four-cycles,
\begin{multline}\label{eq:PMreduce}
\dev(\m)=2\binom{\ell-1}{2}\sum_{0\le i<j\le \ell} m_{ij}^2+4\sum_{\{i,j\}\cap\{s,t\}=\emptyset} m_{ij}m_{st} \\ -2(\ell-2)\sum_{0\le i<j<k\le \ell} (m_{ij}m_{ik}+m_{ij}m_{jk}+m_{ik}m_{jk}).
\end{multline}
Now consider the left hand side of Equation~\eqref{eq:SOS}.  Distributing $4\ell$ yields
\[
\ell\sum\limits_{0\le i<j<k \le \ell} (m_{ijk})^2+4\ell\sum_{\{i,j\}\cap\{s,t\}} m_{ij}m_{st}-2(\ell-1)|\m|^2.
\]
This expression can be re-written in the form of Equation~\eqref{eq:PMreduce} using the following two expressions and simplifying:
\[
\begin{array}{rl}
|\m|^2= & \sum\limits_{0\le i<j\le \ell} m_{ij}^2 +2\sum\limits_{\{i,j\}\cap\{s,t\}=\emptyset} m_{ij}m_{st}\\[10 pt]
&+2\sum\limits_{0\le i<j<k\le \ell} (m_{ij}m_{ik}+m_{ij}m_{jk}+m_{ik}m_{jk})\\[15 pt]
\sum\limits_{0\le i<j<k\le\ell} m_{ijk}^2= & (\ell-1)\sum\limits_{0\le i<j\le \ell} m_{ij}^2\\[10 pt]
&+2\sum\limits_{0\le i<j<k\le \ell}(m_{ij}m_{ik}+m_{ij}m_{jk}+m_{ik}m_{jk}).\qedhere
\end{array}
\]
\end{proof}

As an immediate corollary we obtain (1)$\implies$(2) in Theorem~\ref{thm:1}.

\begin{cor}\label{cor:1to2}
If $(A_\ell,\m)$ is free then $\dev(\m_U)\le q_U(|U|-1)$ for every subset $U\subset\{v_0,\ldots,v_\ell\}$.
\end{cor}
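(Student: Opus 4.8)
The plan is to derive this immediately from Theorem~\ref{thm:AnyBraidNonFree} and the fact (Proposition~\ref{prop:FreeClosed}) that freeness descends to closed sub-arrangements; as in Theorem~\ref{thm:1}, we work under the standing assumption that $\m$ lies in the balanced cone. First I would dispose of the case $|U|\le 3$: then $C_4(U)=\emptyset$, so $\dev(\m_U)=0$ and the claimed inequality holds trivially. So assume $|U|\ge 4$ and write $|U|=r+1$. The complete graph on the vertex set $U$ is a connected induced subgraph of $K_{\ell+1}$ on $r+1$ vertices, so by the discussion of graphic arrangements in Section~\ref{sec:notation} it corresponds to an intersection $X\in L_r$ with $(A_\ell)_X=A_U$, the sub-braid arrangement of type $A_r$ on $U$, and with multiplicity $\m_X=\m_U$.

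Next I would check that $(A_U,\m_U)$ satisfies the hypotheses of Theorem~\ref{thm:AnyBraidNonFree}. Freeness is exactly Proposition~\ref{prop:FreeClosed} applied to $X$: since $(A_\ell,\m)$ is free, so is $(A_U,\m_U)$. Membership in the balanced cone transfers for free, since the triangle inequalities defining the balanced cone for $\m_U$ are a subset of those for $\m$ (every triple of vertices of $U$ is a triple of vertices of $\{v_0,\dots,v_\ell\}$). Likewise the notion of an odd three-cycle is intrinsic to the induced subgraph, so the quantity $q_U$ in Theorem~\ref{thm:1} is precisely the number of odd three-cycles of $\m_U$ as a multiplicity on $A_U$.

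Finally I would invoke the contrapositive of Theorem~\ref{thm:AnyBraidNonFree} for the type-$A_r$ multi-braid arrangement $(A_U,\m_U)$: since it is free, we cannot have $\dev(\m_U)>q_U\cdot r$, hence $\dev(\m_U)\le q_U\,r=q_U\,(|U|-1)$, which is the asserted bound. The only thing to watch is the index bookkeeping — a sub-braid arrangement on a vertex set of size $|U|$ has type $A_{|U|-1}$, so the factor coming from Theorem~\ref{thm:AnyBraidNonFree} is $|U|-1$, not $|U|$. Beyond this there is no real obstacle; the corollary is genuinely immediate once Theorem~\ref{thm:AnyBraidNonFree} and Proposition~\ref{prop:FreeClosed} are available.
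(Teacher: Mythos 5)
Your argument is correct and is essentially the paper's own proof: apply Proposition~\ref{prop:FreeClosed} to restrict freeness to $(A_U,\m_U)$ and then invoke Theorem~\ref{thm:AnyBraidNonFree} (in contrapositive) for the type-$A_{|U|-1}$ sub-arrangement. Your extra checks — that the balanced cone condition and the count $q_U$ restrict correctly, and that $|U|\le 3$ is trivial — are points the paper leaves implicit, and your index bookkeeping matches the paper's bound $q_U(|U|-1)$.
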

\begin{proof}
By Proposition~\ref{prop:FreeClosed}, freeness of $(A_\ell,\m)$ implies freeness of $(A_U,\m_U)$ for every subset $U\subset\{v_0,\ldots,v_\ell\}$.  By Theorem~\ref{thm:AnyBraidNonFree}, we must have $\dev(\m_U)\le q_U(|U|-1)$ for every subset $U\subset\{v_0,\ldots,v_\ell\}$ as well.
\end{proof}

\section{From deviations to ANN multiplicities}\label{sec:ANNMultiplicities}

In this section we prove the first part of the implication (2)$\implies$(3) in Theorem~\ref{thm:1}.  Namely, we prove that for a multiplicity $\m$ in the central cone, the inequalities $\dev(\m)\le q_U(|U|-1)$ on deviations are enough to guarantee that $\m$ is an ANN multiplicity.  In fact, we show that it is enough to have these inequalities on subsets of size four.

Recall from the introduction that we call $\m$ an ANN multiplicity on $A_\ell$ if $\m$ is in the balanced cone of multiplicities and there exist non-negative integers $n_0,\ldots,n_\ell$ and $\epsilon_{ij}\in\{-1,0,1\}$ so that $m_{ij}=n_i+n_j+\epsilon_{ij}$ for $0\le i<j\le \ell$.

\begin{lem}\label{lem:K4restrictions}
Suppose $\m$ is a multiplicity on $A_3$ and $\dev(\m)\le 3q$.  Then $\m(C)\le 2$ for each four-cycle $C$ in $K_4$.
\end{lem}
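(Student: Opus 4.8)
The plan is to reduce the statement to two elementary observations about parities on $K_4$ together with the crude size bound that the hypothesis already gives for free.

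First I would record the arithmetic of $q$ on $K_4$. Since each edge of $K_4$ lies in exactly two of its four triangles, the sum of the four triangle-sums $m_{ijk}$ equals $2|\m|$, which is even; hence the number $q$ of odd three-cycles is even, i.e. $q\in\{0,2,4\}$. In particular $3q\le 12$, so for every four-cycle $C$ in $K_4$ we get $\m(C)^2\le \dev(\m)\le 3q\le 12$, whence $\m(C)\le 3$. Thus the entire content of the lemma is to rule out the value $\m(C)=3$.

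Second I would analyze the parity of $\m(C)$. If $C$ traverses $i,j,s,t$ in order then
\[
\m(C)=|m_{ij}-m_{js}+m_{st}-m_{it}|\equiv m_{ij}+m_{js}+m_{st}+m_{it}\equiv m_{ijs}+m_{ist}\pmod 2,
\]
the last congruence because cutting $C$ along the diagonal $\{v_i,v_s\}$ produces the triangles $ijs$ and $ist$, and the repeated edge $\{v_i,v_s\}$ contributes $2m_{is}\equiv 0$. Consequently $\m(C)$ is odd if and only if exactly one of the triangles $ijs$, $ist$ is odd; in particular, if $\m(C)$ is odd then $K_4$ has at least one odd and at least one even three-cycle, which forces $1\le q\le 3$ and hence, by the first step, $q=2$.

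Combining the two steps finishes the argument: if some four-cycle $C$ had $\m(C)=3$, then $\m(C)$ is odd, so $q=2$ and therefore $\dev(\m)\le 3q=6$; but $\dev(\m)\ge \m(C)^2=9$, a contradiction. Hence $\m(C)\le 2$ for every four-cycle $C$ in $K_4$. I do not expect a real obstacle here; the only point requiring mild care is verifying the diagonal-cut congruence relating $\m(C)\bmod 2$ to the triangle-sums, and recording that the parity constraint pins $q$ to the even values $\{0,2,4\}$ rather than merely bounding it.
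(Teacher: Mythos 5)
Your proof is correct, but it takes a genuinely different route from the paper's. The paper labels the three signed four-cycle sums $T_1,T_2,T_3$, uses the linear dependence $T_1+T_2=T_3$ together with $\dev(\m)=T_1^2+T_2^2+T_3^2$, and argues that if some $|T_k|\ge 3$ then one of the other two has absolute value at least $2$, giving $\dev(\m)\ge 9+4=13>12\ge 3q$ from the crude bound $q\le 4$ alone. You instead argue by parity: each edge of $K_4$ lies in exactly two triangles, so the four triangle sums add to $2|\m|$ and $q$ is even; and cutting a four-cycle along a diagonal gives $\m(C)\equiv m_{ijs}+m_{ist}\pmod 2$, so an odd value of $\m(C)$ forces exactly one odd and one even triangle in that pair, hence $q=2$ and $\dev(\m)\le 6<9\le\m(C)^2$, while $\m(C)\ge 4$ is already excluded by $\m(C)^2\le\dev(\m)\le 3q\le 12$. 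Both arguments are elementary and of comparable length; the paper's avoids parity entirely by exploiting the dependence among the three four-cycles of $K_4$, whereas yours avoids that dependence and extracts slightly finer structural facts (the evenness of $q$ on $K_4$ and the mod-$2$ link between four-cycle deviations and triangle sums), which meshes naturally with the role odd three-cycles play elsewhere in the paper.
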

\begin{proof}
There are three four-cycles.  Set
\[
\begin{array}{rl}
T_1= & m_{01}-m_{12}+m_{23}-m_{03}\\
T_2= & m_{13}-m_{01}+m_{02}-m_{23}\\
T_3= & m_{13}-m_{12}+m_{02}-m_{03}.
\end{array}
\]
Notice $T_1+T_2=T_3$, and $\dev(\m)=T_1^2+T_2^2+T_3^2$.  Now, suppose without loss that $|T_3|\ge 3$.  Then either $|T_1|\ge 2$ or $|T_2|\ge 2$.  But then $P(\m)\ge 13$, contradicting that $\dev(\m)\le 3q\le 12$ (since $q\le 4$).
\end{proof}

\begin{prop}\label{prop:ANNmultiplicity}
Let $(A_\ell,\m)$ be a multi-braid arrangement so that $\m$ is in the balanced cone of multiplicities and $\dev(\m_U)\le 3q_U$ for every subset $U\subset\{v_0,\ldots,v_\ell\}$ with $|U|=4$.  Then $\m$ is an ANN multiplicity.
\end{prop}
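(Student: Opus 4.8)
The plan is to produce the integers $n_0,\ldots,n_\ell$ and the signs $\epsilon_{ij}$ explicitly, and then verify that the defining equations $m_{ij}=n_i+n_j+\epsilon_{ij}$ hold. The obstruction to simply ``solving'' $m_{ij}=n_i+n_j$ is precisely the deviation: the linear system $m_{ij}=n_i+n_j$ (over $\mathbb{Q}$) is solvable exactly when all the four-cycle quantities $m_{ij}-m_{js}+m_{st}-m_{it}$ vanish, i.e. when $\dev(\m)=0$. When $\dev(\m)>0$ we must instead find integers $n_i$ so that the residuals $m_{ij}-n_i-n_j$ are all in $\{-1,0,1\}$. The key input is Lemma~\ref{lem:K4restrictions}, which, applied to every $4$-element subset, tells us $|m_{ij}-m_{js}+m_{st}-m_{it}|\le 2$ for every four-cycle of $K_{\ell+1}$; this is a uniform ``local flatness'' statement that will be promoted to the global existence of the $n_i$.

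First I would fix a spanning star, say centered at $v_0$, and set $n_0$ to be a parameter to be chosen at the end; for $i\ge 1$ the natural guess is $n_i = m_{0i}-n_0$, which forces $\epsilon_{0i}=0$ for all $i$. Then for an edge $\{v_i,v_j\}$ with $i,j\ge 1$ one computes $m_{ij}-n_i-n_j = m_{ij}-m_{0i}-m_{0j}+2n_0$, and the task is to choose the single integer $n_0$ so that this lies in $\{-1,0,1\}$ simultaneously for all pairs $i,j$. This will generally be impossible with a star, because the quantity $m_{ij}-m_{0i}-m_{0j}$ can vary by more than $2$ across pairs; so instead I would not demand $\epsilon_{0i}=0$. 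The better approach is: choose $n_i := \lfloor m_{0i}/2\rfloor$ or a half-integer correction thereof — more precisely, observe that the diagonal values $m_{ii}$ are ``morally'' $2n_i$, so set $2\tilde n_i := $ (a suitable average of $m_{ij}+m_{ik}-m_{jk}$ over triangles through $v_i$), round to get integers $n_i$, and control the rounding error using that $\m$ lies in the balanced cone together with the $4$-cycle bound $\le 2$. I expect the clean route is to work first over $K_4$: on any $4$ vertices $\{v_i,v_j,v_k,v_l\}$, the three quantities $m_{ij}+m_{kl}$, $m_{ik}+m_{jl}$, $m_{il}+m_{jk}$ pairwise differ by at most $2$ (these differences are exactly the three $T$'s of Lemma~\ref{lem:K4restrictions}), and from this one shows directly that $\m|_{K_4}$ is an ANN multiplicity — this is essentially a finite case analysis on the residuals mod $2$ and on which of the three sums is largest. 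Then the global step glues: define $n_i$ on all of $\{v_0,\ldots,v_\ell\}$ via the star at $v_0$ but with the corrected formula forced by the $K_4$ analysis on $\{v_0,v_i,v_j,v_k\}$, and check consistency across different choices of $j,k$ using the $4$-cycle inequalities again.

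Concretely, the steps in order would be: (i) from the hypothesis and Lemma~\ref{lem:K4restrictions}, record that every four-cycle quantity of $\m$ has absolute value at most $2$; (ii) prove the base case $\ell=3$: a multiplicity on $A_3$ in the balanced cone with all three $T$'s in $\{-2,-1,0,1,2\}$ is an ANN multiplicity — do this by a short parity/ordering case analysis, producing $n_0,n_1,n_2,n_3$ and the $\epsilon_{ij}$ by hand; (iii) for general $\ell$, set $n_0=0$ (we may translate at the end to make all $n_i\ge 0$, since adding a constant $c$ to every $n_i$ changes $m_{ij}$ by $2c$ — wait, that is not allowed, so instead one observes the $n_i$ produced are already automatically bounded below once we fix, say, $\min_i n_i$; handle nonnegativity by noting $n_i\ge 0$ follows from $m_{ij}\ge 1$ and the $\epsilon$ bound after choosing the normalization so that the smallest $n_i$ is $0$ or by adjusting $n_0$); (iv) apply step (ii) to each $\{v_0,v_i,v_j,v_k\}$ to get candidate values, show the value assigned to $n_i$ is independent of the auxiliary $j,k$ (this is where the $4$-cycle bound across overlapping $K_4$'s is used), and conclude $m_{ij}=n_i+n_j+\epsilon_{ij}$ with $\epsilon_{ij}\in\{-1,0,1\}$ for all $i<j$.

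The main obstacle I anticipate is step (iv), the consistency of the local ($K_4$) solutions: a priori the analysis on $\{v_0,v_i,v_j,v_k\}$ and on $\{v_0,v_i,v_j,v_{k'}\}$ could assign different integer values to the ``same'' quantity $n_i$, and reconciling these requires showing the relevant $4$-cycle quantities are not merely $\le 2$ but interact rigidly (e.g. that if $T_1=\pm 2$ then the others are forced). I would handle this by extracting from Lemma~\ref{lem:K4restrictions}'s proof the sharper statement that if one $T$ among $T_1,T_2,T_3$ has absolute value $2$ then the other two are each $0$ or $\pm1$ with matching sign, which pins down the rounding uniquely. Everything else — the balanced-cone inequalities ensuring the $n_i$ stay nonnegative, and the bookkeeping of signs — should be routine.
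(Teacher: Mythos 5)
Your plan correctly isolates the key input (Lemma~\ref{lem:K4restrictions} applied to every four-element subset gives $|m_{ij}-m_{js}+m_{st}-m_{it}|\le 2$ for every four-cycle), but the heart of the argument is missing. Your step (iv) --- gluing local $K_4$ decompositions into a single choice of $n_0,\ldots,n_\ell$ --- is exactly where the real work lies, and you only conjecture how it might go. The difficulty is that an ANN decomposition on a $K_4$ is genuinely non-unique: for instance with all $m_{ij}=2$ one may take $n_i=1$ for all $i$ with all $\epsilon_{ij}=0$, or $n_0=0$, $n_1=n_2=n_3=1$ with $\epsilon_{0i}=1$ and $\epsilon_{ij}=0$ otherwise. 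So ``the value assigned to $n_i$'' by your step (ii) is not well defined without fixing a canonical local solution, and the rigidity statement you propose to extract from the lemma (that a $T$ of absolute value $2$ pins the rounding uniquely) is neither proved nor sufficient: when the $T$'s are $0$ or $\pm 1$ there is still rounding freedom, the half-integer quantities $(m_{0i}+m_{0j}-m_{ij})/2$ depend on the auxiliary vertex $j$, and it is precisely the compatibility of these choices across overlapping $K_4$'s that must be established. Your treatment of nonnegativity is also not ``routine'': as you noticed mid-argument, you cannot translate all $n_i$ by a constant (that changes $\m$), and ensuring $n_i\ge 0$ requires using the balanced-cone inequalities at a specific point of the construction, which your sketch does not do.

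For comparison, the paper avoids the local-to-global gluing altogether: it inducts on $\ell$ (base case $\ell=2$, with explicit ceilings $n_0=\lceil (m_{01}+m_{02}-m_{12})/2\rceil$, etc.), extends a decomposition on $\{v_0,\ldots,v_{\ell-1}\}$ by an initial guess for $\n_\ell$ with all $\nepsilon_{i\ell}\le 1$, and then runs a correction algorithm: whenever some $\nepsilon_{j\ell}\le -2$, it either increases some $\n_s$, decreases $\n_\ell$, or decreases $\n_j$, using the four-cycle bound $\m(C)\le 2$ to rule out conflicting signs $\nepsilon_{s\ell}=1$, $\nepsilon_{st}=-1$, and using the balanced-cone inequalities in the case $\n_\ell=0$ to force $\nepsilon_{js}=-1$ for all $s$ and hence $\n_j>0$. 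If you want to salvage your approach you would need to either prove a genuine uniqueness/canonicity statement for suitably normalized local solutions and verify compatibility on overlaps, or replace the gluing by an adjustment procedure of this kind; as written, the proposal has a gap at its central step.
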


\begin{proof}
We need only show that there exist non-negative integers $n_i$ for $i=0,\ldots,\ell$ and integers $\epsilon_{ij}\in\{-1,0,1\}$ for $0\le i<j\le \ell$ so that $m_{ij}=n_i+n_j+\epsilon_{ij}$.  By Lemma~\ref{lem:K4restrictions}, we must have $\m(C)\le 2$ for every four-cycle $C\in C_4(K_{\ell+1})$.  We use this condition to provide an inductive algorithm producing the integers $n_0,\ldots,n_\ell$.

If $\ell=2$, set $n_0= \left\lceil\dfrac{m_{01}+m_{02}-m_{12}}{2}\right\rceil, n_1=\left\lceil\dfrac{m_{01}+m_{12}-m_{02}}{2}\right\rceil,$ and $ n_2= \left\lceil\dfrac{m_{02}+m_{12}-m_{01}}{2}\right\rceil.$  Since $\m$ is in the balanced cone, $n_i\ge 0$ for $i=0,1,2$.  Moreover, $m_{ij}=n_i+n_j+\epsilon_{ij}$, where $\epsilon_{ij}\in\{-1,0\}$.  

Now assume $\ell>2$.  We make an initial guess at what the non-negative integers $n_0,\ldots,n_\ell$ and $\epsilon_{ij}$ should be, and then adjust as necessary.  By induction on $\ell$, there exist non-negative integers $\n_0,\ldots,\n_{\ell-1}$ and $\nepsilon_{ij}\in\{-1,0,1\}$ such that $m_{ij}=\n_i+\n_j+\nepsilon_{ij}$ for $0\le i<j\le\ell-1$.  Let $\n_\ell$ be a non-negative integer satisfying $\n_\ell+\n_i\ge m_{i\ell}-1$ and set $\nepsilon_{i\ell}=m_{i\ell}-(\n_i+\n_\ell)$ for every $i<\ell$, so $m_{i\ell}=\n_i+\n_\ell+\nepsilon_{i\ell}$.  By the choice of $\n_\ell$, we have $\nepsilon_{i\ell}\le 1$ for all $i<\ell$.

Now suppose there is an index $0\le j<\ell$ so that $\nepsilon_{j\ell}\le -2$.  Our goal is to decrease either $\n_\ell$ or $\n_j$ by one, thereby increasing $\nepsilon_{j\ell}$, without disturbing any of the hypotheses made so far, namely
\begin{align*}
&\n_i\ge 0\mbox{ for all } 0\le i\le \ell,\nonumber  \\
&\nepsilon_{i\ell}\le 1\mbox{ for all } i<\ell,\tag{$\star$}\label{assumptions} \\
&\nepsilon_{st}\in\{-1,0,1\}\mbox{ for all } 0\le s<t\le \ell-1. 
\end{align*}
First we assume $\n_\ell>0$ and try to decrease $\n_\ell$ by one.  We can do this without disturbing assumptions~\eqref{assumptions} provided there is no index $s$ so that $\epsilon_{s\ell}=1$.  So, assume that there is an index $0\le s<\ell$ so that $\epsilon_{s\ell}=1$.  We claim that in this situation, $\epsilon_{st}\ge 0$ for every $t\neq s$.  Suppose to the contrary that there is an index $t$ so that $\epsilon_{st}=-1$ and consider the four-cycle $C:\ell\to s\to t\to j\to \ell$.  Then
\[
\begin{array}{rl}
\m(C) & = |\nepsilon_{s\ell}-\nepsilon_{j\ell}+\nepsilon_{jt}-\nepsilon_{st}|\\
& \ge 1+2+\nepsilon_{jt}+1\\
& \ge 3,
\end{array}
\]
since $\nepsilon_{jt}\in\{-1,0,1\}$ by the inductive hypothesis.  This contradicts our assumption that $\m(C)\le 2$.  So it follows that $\nepsilon_{st}\in\{0,1\}$ for all $t$.  Thus we may increase $\n_s$ by one, thereby decreasing $\nepsilon_{st}$ by one for every $t\neq s$, without disturbing the hypothesis that $\nepsilon_{st}\in\{-1,0,1\}$.  Since we can apply this argument at every index $s$ so that $\nepsilon_{s\ell}=1$, we may assume $\nepsilon_{s\ell}\le 0$ for every $0\le s<\ell$.  Hence, if $\n_\ell>0$, it is now clear that we can decrease $\n_\ell$ by one without disturbing assumptions~\eqref{assumptions}.

Now assume that $\n_\ell=0$.  Then, for any $s<\ell$,
\[
\begin{array}{rl}
m_{s\ell}+m_{j\ell}-m_{js} & =(\n_s+\nepsilon_{s\ell})+(\n_j+\nepsilon_{j\ell})-(\n_j+\n_s+\nepsilon_{js})\\
&=\nepsilon_{s\ell}+\nepsilon_{j\ell}-\nepsilon_{js}\\
&\le 0-2-\nepsilon_{js}\\
&\le -1,
\end{array}
\]
since $\nepsilon_{js}\in\{-1,0,1\}$ by the inductive hypothesis.  Since $\m$ is in the balanced cone, we must have equality for all of these, so $\epsilon_{js}=-1$ for every $s\neq j$, $s<\ell$.  If $\n_j=0$ as well, then $m_{j\ell}=\n_j+\n_\ell+\epsilon_{j\ell}\le -2$, contradicting that $m_{j\ell}$ is non-negative.  Hence $\n_j>0$ and we can decrease $\n_j$ by one without disturbing any of assumptions~\eqref{assumptions}.

In either case, we have shown how to increase $\nepsilon_{j\ell}$ if $\nepsilon_{j\ell}\le -2$ without disturbing assumptions~\eqref{assumptions}.  So we iterate the above arguments until $\nepsilon_{j\ell}\ge -1$ for every $j<\ell$, then set $n_i=\n_i$ for $0\le i\le \ell$ and $\nepsilon_{ij}=\epsilon_{ij}$ for $0\le i<j\le\ell$.  This completes the algorithm and the proof.
\end{proof}

With Proposition~\ref{prop:ANNmultiplicity}, we now prove (1)$\iff$(3) in Theorem~\ref{thm:1}.  Most of the heavy lifting is done by Abe-Nuida-Numata in~\cite{AbeSignedEliminable}.

\begin{cor}\label{cor:1iff3}
Suppose $\m$ is in the balanced cone of multiplicities on the $A_\ell$ braid arrangement.  Then $(A_\ell,\m)$ is free if and only if $\m$ is an ANN multiplicity and the signed graph with $E_G^+=\{\{v_i,v_j\}:\epsilon_{ij}=-1\}$ and $E^-_G=\{\{v_i,v_j\}:\epsilon_{ij}=1\}$ is signed-eliminable.
\end{cor}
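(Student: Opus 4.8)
The plan is to obtain this corollary by concatenating three facts that are already available: the implication (1)$\implies$(2) of Theorem~\ref{thm:1} (recorded as Corollary~\ref{cor:1to2}), the passage from deviation inequalities to ANN multiplicities (Proposition~\ref{prop:ANNmultiplicity}), and the Abe--Nuida--Numata classification~\cite{AbeSignedEliminable}, which asserts that an ANN multiplicity is free if and only if its associated signed graph is signed-eliminable. No new estimates are needed; the work is purely in chaining these together correctly.

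For the ``if'' direction I would argue directly from~\cite{AbeSignedEliminable}: if $\m$ is an ANN multiplicity whose associated signed graph is signed-eliminable, then $(A_\ell,\m)$ is free. This is exactly the implication (3)$\implies$(1) already cited in the introduction, so nothing further is required. For the ``only if'' direction, suppose $(A_\ell,\m)$ is free. First apply Corollary~\ref{cor:1to2} to get $\dev(\m_U)\le q_U(|U|-1)$ for every subset $U\subseteq\{v_0,\dots,v_\ell\}$; specializing to $|U|=4$ gives $\dev(\m_U)\le 3q_U$. Since $\m$ lies in the balanced cone by hypothesis, Proposition~\ref{prop:ANNmultiplicity} now produces non-negative integers $n_0,\dots,n_\ell$ and $\epsilon_{ij}\in\{-1,0,1\}$ with $m_{ij}=n_i+n_j+\epsilon_{ij}$, i.e.\ $\m$ is an ANN multiplicity. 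It remains to see that the signed graph attached to \emph{this} decomposition is signed-eliminable, and for that I would invoke the Abe--Nuida--Numata classification in its full ``free $\iff$ signed-eliminable'' form applied to the decomposition $(n,\epsilon)$ just constructed: since $(A_\ell,\m)$ is free, the signed graph must be signed-eliminable. (As a byproduct this shows the conclusion does not depend on which ANN decomposition of $\m$ one chooses, since every such decomposition is then equivalent to freeness of the same multi-arrangement.)

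The only point requiring care is the bookkeeping of the signed graph: the convention used in the statement above records $E_G^+=\{\epsilon_{ij}=-1\}$ and $E_G^-=\{\epsilon_{ij}=1\}$, which is the reverse of the labeling in Theorem~\ref{thm:1}(3). I would dispose of this by noting that signed-eliminability is invariant under a global interchange of $E^+$ and $E^-$ --- the local condition defining an eliminable vertex is symmetric in the two sign classes --- so the two conventions describe the same property; alternatively one fixes the~\cite{AbeSignedEliminable} convention once and for all. Apart from this labeling issue, I do not anticipate any genuine obstacle: essentially all of the difficulty has been absorbed into Proposition~\ref{prop:ANNmultiplicity} and into the work of Abe--Nuida--Numata, and this corollary is the packaging step that records the equivalence (1)$\iff$(3) of Theorem~\ref{thm:1}.
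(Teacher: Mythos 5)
Your proposal is correct and follows essentially the same route as the paper: Corollary~\ref{cor:1to2} gives the deviation inequalities on four-element subsets, Proposition~\ref{prop:ANNmultiplicity} upgrades these to an ANN decomposition, and the Abe--Nuida--Numata theorem supplies both the freeness of ANN multiplicities with signed-eliminable graphs and, applied to the constructed decomposition, the signed-eliminability when $(A_\ell,\m)$ is free. Your extra care with the $+/-$ labeling convention (invariance under interchanging $E_G^+$ and $E_G^-$) is a reasonable tidy-up of a point the paper passes over silently.
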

\begin{proof}
If $(A_\ell,\m)$ is free and $\m$ is in the balanced cone, then $\dev(\m_U)\le 3q_U$ for every subset $U\subset\{v_0,\ldots,v_\ell\}$ of size four by Corollary~\ref{cor:1to2}.  By Proposition~\ref{prop:ANNmultiplicity}, $\m$ is an ANN multiplicity.  By~\cite[Theorem~0.3]{AbeSignedEliminable}, the signed graph with $E_G^+=\{\{v_i,v_j\}:\epsilon_{ij}=-1\}$ and $E^-_G=\{\{v_i,v_j:\epsilon_{ij}=1\}$ is signed-eliminable.  For the converse, if $\m$ is an ANN multiplicity associated to a signed-eliminable graph, then $(A_\ell,\m)$ is free by~\cite[Theorem~0.3]{AbeSignedEliminable}.
\end{proof}

\begin{remark}
In the result~\cite[Theorem~0.3]{AbeSignedEliminable}, Abe-Nuida-Numata do not have the condition that $\m$ is in the balanced cone.  However, this turns out to be a necessary condition for their arguments~\cite{AbeCommunication}.  Furthermore their arguments, using addition-deletion techniques for multi-arrangements from~\cite{EulerMult}, work for any ANN multiplicity as we have defined it~\cite{AbeCommunication}.
\end{remark}

\section{Detecting signed-eliminable graphs}\label{sec:SignedEliminable}
In this section we finish the proof of Theorem~\ref{thm:1}.  We have already shown in Proposition~\ref{prop:ANNmultiplicity} that if the inequalities of Theorem~\ref{thm:1}.(2) are satisfied then $\m$ is an ANN multiplicity.  Now we show that these inequalities also detect when the associated signed graph is not signed-eliminable.  We follow the presentation of signed-eliminable graphs from~\cite{NuidaSignedEliminable,AbeSignedEliminable}.

Let $G$ be a \textit{signed} graph on $\ell+1$ vertices.  That is, each edge of $G$ is assigned either a $+$ or a $-$, and so the edge set $E_G$ decomposes as a disjoint union $E_G=E_G^+\cup E_G^-$.  Define
\[
m_G(ij)=\left\lbrace
\begin{array}{rl}
1 & \{i,j\}\in E^+_G\\
-1 & \{i,j\}\in E^-_G\\
0 & \text{otherwise}.
\end{array}
\right.
\]
The graph $G$ is \textit{signed-eliminable} with \textit{signed-elimination ordering} $\nu:V(G)\rightarrow \{0,\ldots,\ell\}$ if $\nu$ is bijective and, for every three vertices $v_i,v_j,v_k\in V(G)$ with $\nu(v_i),\nu(v_j)<\nu(v_k)$, the induced sub-graph $G|_{v_i,v_j,v_k}$ satisfies the following conditions.
\begin{itemize}
	\item For $\sigma\in\{+,-\}$, if $\{v_i,v_k\}$ and $\{v_j,v_k\}$ are edges in $E^\sigma_G$ then $\{v_i,v_j\}\in E^\sigma_G$
	\item For $\sigma\in\{+,-\}$, if $\{v_k,v_i\}\in E^\sigma_G$ and $\{v_i,v_j\}\in E^{-\sigma}_G$ then $\{v_k,v_j\}\in E_G$
\end{itemize}
These two conditions generalize the notion of a graph possessing an elimination ordering, which is equivalent to the graph being chordal.  A graph is chordal if and only if it has no induced sub-graph which is a cycle of length at least four.  In~\cite{NuidaSignedEliminable}, Nuida establishes a similar characterization for signed-eliminable graphs, to which we now turn.

\begin{defn}\label{defn:ObstructionGraphs}
\begin{enumerate}
\item A graph with $(\ell+1)$ vertices $v_0,v_1,\ldots,v_{\ell}$ with $\ell\ge 3$ is a $\sigma$-mountain, where $\sigma\in\{+,-\}$, if $\{v_0,v_i\}\in E^\sigma_G$ for $i=2,\ldots,\ell-1$, $\{v_i,v_{i+1}\}\in E^{-\sigma}_G$ for $i=1,\ldots,\ell-1$, and no other pair of vertices is joined by an edge.  (See Figure~\ref{fig:SMSH} - edges of sign $\sigma$ are denoted by a single edge and edges of sign $-\sigma$ are denoted by a doubled edge.)
\item A graph with $(\ell+1)$ vertices $v_0,v_1,v_2,\ldots,v_{\ell}$ with $\ell\ge 3$ is a $\sigma$-hill, where $\sigma\in\{+,-\}$, if $\{v_0,v_1\}\in E^\sigma_G,\{v_0,v_i\}\in E^{\sigma}_G$ for $i=2,\ldots,\ell-1$, $\{v_1,v_i\}\in E^\sigma_G$ for $i=3,\ldots,\ell$, $\{v_i,v_{i+1}\}\in E^{-\sigma}_G$ for $i=2,\ldots,\ell-1$, and no other pair of vertices is connected by an edge.  (See Figure~\ref{fig:SMSH}.)
\item A graph with $(\ell+1)$ vertices $v_0,\ldots,v_{\ell}$ with $\ell\ge 2$ is a $\sigma$-cycle if $\{v_i,v_{i+1}\}\in E^\sigma_G$ for $i=0,\ldots,\ell-1$, $\{v_0,v_{\ell}\}\in E^\sigma_G$, and no other pair of vertices is connected by an edge.
\end{enumerate}
\end{defn}

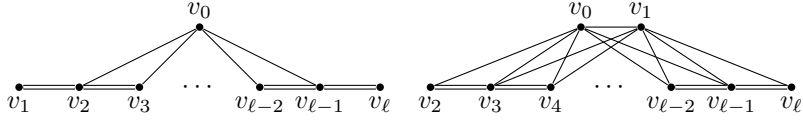
\begin{figure}
\begin{tikzpicture}[scale=.8]
\tikzstyle{dot}=[circle,fill=black,inner sep=1 pt];

\node[dot] (w) at (0,1) {};
\node[dot] (1) at (-3,0) {};
\node[dot] (2) at (-2,0) {};
\node[dot] (3) at (-1,0) {};
\node[dot] (n-2) at (1,0) {};
\node[dot] (n-1) at (2,0) {};
\node[dot] (n) at (3,0) {};
\node at (0,0) {$\cdots$};

\draw (1)node[below]{$v_1$} (2)node[below]{$v_2$} (3)node[below]{$v_3$} (n-2)node[below]{$v_{\ell-2}$} (n-1)node[below]{$v_{\ell-1}$} (n)node[below]{$v_\ell$};
\draw (w)node[above]{$v_0$}--(2) (w)--(3) (w)--(n-2) (w)--(n-1) ;
\draw[double distance=1 pt] (1)--(2)--(3) (n-2)--(n-1)--(n);
\end{tikzpicture}
\begin{tikzpicture}[scale=.8]
\tikzstyle{dot}=[circle,fill=black,inner sep=1 pt];

\node[dot] (w1) at (-.5,1) {};
\node[dot] (w2) at (.5,1) {};
\node[dot] (1) at (-3,0) {};
\node[dot] (2) at (-2,0) {};
\node[dot] (3) at (-1,0) {};
\node[dot] (n-2) at (1,0) {};
\node[dot] (n-1) at (2,0) {};
\node[dot] (n) at (3,0) {};
\node at (0,0) {$\cdots$};

\draw (1)node[below]{$v_2$} (2)node[below]{$v_3$} (3)node[below]{$v_4$} (n-2)node[below]{$v_{\ell-2}$} (n-1)node[below]{$v_{\ell-1}$} (n)node[below]{$v_{\ell}$};
\draw (w1)node[above]{$v_0$}--(1) (w1)--(2) (w1)--(3) (w1)--(n-2) (w1)--(n-1);
\draw (w2)node[above]{$v_1$}--(2) (w2)--(3) (w2)--(n-2) (w2)--(n-1) (w2)--(n);
\draw (w1)--(w2);
\draw[double distance=1 pt] (1)--(2)--(3) (n-2)--(n-1)--(n);
\end{tikzpicture}

\caption{$\sigma$-mountain (at left) and $\sigma$-hill (at right)}\label{fig:SMSH}

\end{figure}

\begin{thm}\cite[Theorem~5.1]{NuidaSignedEliminable}\label{thm:SignedEliminableCharacterization}
	Let $G$ be a signed graph.  Then $G$ is signed-eliminable if and only if the following three conditions are satisfied.
	\begin{enumerate}
		\item[$(C1)$] Both $G_+$ and $G_-$ are chordal.
		\item[$(C2)$] Every induced sub-graph on four vertices is signed eliminable.
		\item[$(C3)$] No induced sub-graph of $G$ is a $\sigma$-mountain or a $\sigma$-hill.
	\end{enumerate}
\end{thm}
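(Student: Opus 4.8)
Since this is Nuida's theorem~\cite{NuidaSignedEliminable}, the plan is largely to cite it; but here is how the argument goes. For the forward direction I would just unwind the definition: given a signed-elimination ordering $\nu$, the first local condition says that in each of $G_+$ and $G_-$ the $\nu$-lower neighbors of every vertex form a clique, so $\nu$ is a perfect elimination ordering of both $G_+$ and $G_-$, which gives $(C1)$; restricting $\nu$ to any vertex subset is again a signed-elimination ordering, which gives $(C2)$; and for $(C3)$ I would check by hand that a $\sigma$-mountain or a $\sigma$-hill admits no signed-elimination ordering---no vertex can be placed last without violating one of the two local conditions---and then use heredity to rule these out as induced subgraphs.

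For the converse I would induct on the number of vertices $n=\ell+1$, the cases $n\le 4$ being handled directly by hypothesis $(C2)$. The inductive step reduces to finding a single vertex $v$ that may legitimately be placed last, i.e.\ a \emph{signed-simplicial} vertex: its $+$-neighborhood should be a $+$-clique, its $-$-neighborhood a $-$-clique, and whenever $v$ is $\sigma$-adjacent to $v_i$ and $v_i$ is $(-\sigma)$-adjacent to $v_j$ one needs $v$ adjacent to $v_j$. Once such a $v$ is found, $G-v$ still satisfies $(C1)$--$(C3)$ (chordality of $G_\pm$, four-vertex signed-eliminability, and absence of mountains and hills all pass to induced subgraphs), so by induction $G-v$ has a signed-elimination ordering, and giving $v$ the top label extends it to one for $G$.

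The main obstacle is thus the existence of a signed-simplicial vertex. My plan would be to start from a simplicial vertex of $G_+$ (which exists since $G_+$ is chordal) or of $G_-$, and argue that if every such candidate violates the cross-sign condition then the resulting bad configurations---a vertex $v$ with a $\sigma$-neighbor $v_i$ and a $(-\sigma)$-neighbor $v_j$ of $v_i$ with $v$ and $v_j$ non-adjacent---can be chained through the graph, using chordality of $G_\pm$ to control cliques and $(C2)$ to settle four-vertex pieces, until an induced $\sigma$-mountain or $\sigma$-hill is forced, contradicting $(C3)$. Making this ``forced-growth'' argument precise---showing that the only obstructions to a signed-simplicial vertex are mountains and hills---is the delicate part, and is exactly the content of the proof in~\cite{NuidaSignedEliminable}.
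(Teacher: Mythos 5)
This statement is imported verbatim from Nuida's work and the paper offers no proof of its own beyond the citation, so your plan of citing~\cite{NuidaSignedEliminable} matches the paper exactly; your outline of the forward direction (heredity of signed-elimination orderings giving $(C2)$, the clique condition on lower $\sigma$-neighbors making the ordering a perfect elimination ordering of $G_+$ and $G_-$ for $(C1)$, and the direct check that mountains and hills admit no signed-elimination ordering for $(C3)$) is correct. For the converse you correctly isolate the key lemma---existence of a signed-simplicial vertex under $(C1)$--$(C3)$---and, as you acknowledge, defer its proof to Nuida, which is consistent with how the paper treats this result.
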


All signed-eliminable graphs on four vertices are listed (with an elimination ordering) in~\cite[Example~2.1]{AbeSignedEliminable}, along with those which are not signed-eliminable.  For use in the proof of Corollary~\ref{cor:SignedEliminableCharacterization}, we also list those graphs which are not signed-eliminable in Table~\ref{tbl:NonSignedEliminable}.  The property of being signed-eliminable is preserved under interchanging $+$ and $-$. Consequently, we list these graphs in Table~\ref{tbl:NonSignedEliminable} up to automorphism with the convention that a single edge takes one of the signs $+,-$, while a double edge takes the other sign.

\begin{table}
	\begin{tabular}{cccccc}
		
		\begin{tikzpicture}[scale=1.0]
		\tikzstyle{dot}=[circle,fill=black,inner sep=1 pt];
		
		\node[dot] (1) {};
		\node[dot] (2)[left of=1]{};
		\node[dot] (3)[below of=2]{};
		\node[dot] (4)[below of=1]{};
		
		\draw (1)--(4) (3)--(2);
		\draw[double distance=1 pt] (3)--(4);
		\end{tikzpicture}
		&
		\begin{tikzpicture}[scale=1.0]
		\tikzstyle{dot}=[circle,fill=black,inner sep=1 pt];
		
		\node[dot] (1) {};
		\node[dot] (2)[left of=1]{};
		\node[dot] (3)[below of=2]{};
		\node[dot] (4)[below of=1]{};
		
		\draw (1)--(2) (1)--(3);
		\draw[double distance=1 pt] (1)--(4);
		\end{tikzpicture}
		&
		\begin{tikzpicture}[scale=1.0]
		\tikzstyle{dot}=[circle,fill=black,inner sep=1 pt];
		
		\node[dot] (1) {};
		\node[dot] (2)[left of=1]{};
		\node[dot] (3)[below of=2]{};
		\node[dot] (4)[below of=1]{};
		
		\draw (1)--(2)--(3)--(4)--(1);
		\end{tikzpicture}
		&
		\begin{tikzpicture}[scale=1.0]
		\tikzstyle{dot}=[circle,fill=black,inner sep=1 pt];
		
		\node[dot] (1) {};
		\node[dot] (2)[left of=1]{};
		\node[dot] (3)[below of=2]{};
		\node[dot] (4)[below of=1]{};
		
		\draw (4)--(1)--(2)--(3);
		\draw[double distance=1 pt] (3)--(4);
		\end{tikzpicture}
		&
		\begin{tikzpicture}[scale=1.0]
		\tikzstyle{dot}=[circle,fill=black,inner sep=1 pt];
		
		\node[dot] (1) {};
		\node[dot] (2)[left of=1]{};
		\node[dot] (3)[below of=2]{};
		\node[dot] (4)[below of=1]{};
		
		\draw (1)--(4)--(2)--(3);
		\draw[double distance=1 pt] (3)--(4);
		\end{tikzpicture}
		&	
		\begin{tikzpicture}[scale=1.0]
		\tikzstyle{dot}=[circle,fill=black,inner sep=1 pt];
		
		\node[dot] (1) {};
		\node[dot] (2)[left of=1]{};
		\node[dot] (3)[below of=2]{};
		\node[dot] (4)[below of=1]{};
		
		\draw (2)--(4)--(3);
		\draw[double distance=1 pt] (2)--(3) (1)--(4);
		\end{tikzpicture}
		\\
		\begin{tikzpicture}[scale=1.0]
		\tikzstyle{dot}=[circle,fill=black,inner sep=1 pt];
		
		\node[dot] (1) {};
		\node[dot] (2)[left of=1]{};
		\node[dot] (3)[below of=2]{};
		\node[dot] (4)[below of=1]{};
		
		\draw (1)--(4) (3)--(2);
		\draw[double distance=1 pt] (1)--(2) (3)--(4);
		\end{tikzpicture}
		&
		\begin{tikzpicture}[scale=1.0]
		\tikzstyle{dot}=[circle,fill=black,inner sep=1 pt];
		
		\node[dot] (1) {};
		\node[dot] (2)[left of=1]{};
		\node[dot] (3)[below of=2]{};
		\node[dot] (4)[below of=1]{};
		
		\draw (1)--(2)--(3)--(4)--(1);
		\draw[double distance=1 pt] (1)--(3);
		\end{tikzpicture}
		&
		\begin{tikzpicture}[scale=1.0]
		\tikzstyle{dot}=[circle,fill=black,inner sep=1 pt];
		
		\node[dot] (1) {};
		\node[dot] (2)[left of=1]{};
		\node[dot] (3)[below of=2]{};
		\node[dot] (4)[below of=1]{};
		
		\draw (1)--(2)--(3)--(4);
		\draw[double distance=1 pt] (3)--(1)--(4);
		\end{tikzpicture}
		&
		\begin{tikzpicture}[scale=1.0]
		\tikzstyle{dot}=[circle,fill=black,inner sep=1 pt];
		
		\node[dot] (1) {};
		\node[dot] (2)[left of=1]{};
		\node[dot] (3)[below of=2]{};
		\node[dot] (4)[below of=1]{};
		
		\draw (4)--(1)--(3)--(2);
		\draw[double distance=1 pt] (1)--(2) (3)--(4);
		\end{tikzpicture}
		&
		\begin{tikzpicture}[scale=1.0]
		\tikzstyle{dot}=[circle,fill=black,inner sep=1 pt];
		
		\node[dot] (1) {};
		\node[dot] (2)[left of=1]{};
		\node[dot] (3)[below of=2]{};
		\node[dot] (4)[below of=1]{};
		
		\draw (1)--(4)--(2)--(3);
		\draw[double distance=1 pt] (2)--(1)--(3)--(4);
		\end{tikzpicture}
		&
		\begin{tikzpicture}[scale=1.0]
		\tikzstyle{dot}=[circle,fill=black,inner sep=1 pt];
		
		\node[dot] (1) {};
		\node[dot] (2)[left of=1]{};
		\node[dot] (3)[below of=2]{};
		\node[dot] (4)[below of=1]{};
		
		\draw (1)--(2)--(4)--(3)--(1);
		\draw[double distance=1 pt] (2)--(3) (1)--(4);
		\end{tikzpicture}
	\end{tabular}
	
	\caption{Graphs on four vertices which are \textit{not} signed-eliminable}\label{tbl:NonSignedEliminable}
\end{table}

\begin{cor}\label{cor:SignedEliminableCharacterization}
	Let $G$ be a signed graph.  Then $G$ is signed-eliminable if and only if the following three conditions are satisfied.
	\begin{enumerate}
		\item[$(C1')$] No induced sub-graph of $G$ is a $\sigma$-cycle of length $>3$.
		\item[$(C2)$] Every induced sub-graph on four vertices is signed eliminable.
		\item[$(C3)$] No induced sub-graph of $G$ is a $\sigma$-mountain or a $\sigma$-hill.
	\end{enumerate}
\end{cor}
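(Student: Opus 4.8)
The plan is to derive Corollary~\ref{cor:SignedEliminableCharacterization} from Theorem~\ref{thm:SignedEliminableCharacterization} by showing that, in the presence of conditions $(C2)$ and $(C3)$, the chordality condition $(C1)$ is equivalent to the ``no long $\sigma$-cycle'' condition $(C1')$; since $(C2)$ and $(C3)$ appear verbatim in both characterizations this suffices. The implication $(C1)\Rightarrow(C1')$ is immediate and uses neither $(C2)$ nor $(C3)$: the vertices of a $\sigma$-cycle span no edges other than the cycle edges, so they induce in $G_\sigma$ a chordless cycle of the same length, and thus chordality of $G_+$ and $G_-$ forbids $\sigma$-cycles of length $>3$.

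For the converse I would argue by contradiction, choosing a signed graph $G$ satisfying $(C1')$, $(C2)$, $(C3)$ but failing $(C1)$, with the fewest vertices among all such. Each of $(C1')$, $(C2)$, $(C3)$ is hereditary, so every proper induced subgraph of $G$ inherits them and, by minimality, satisfies $(C1)$, hence is signed-eliminable by Theorem~\ref{thm:SignedEliminableCharacterization}; $G$ itself is not signed-eliminable. Without loss of generality $G_+$ is not chordal; fix a shortest chordless cycle $C=(w_0,\dots,w_{k-1})$ of $G_+$. Its vertex set induces a subgraph whose positive part still contains this chordless cycle, hence is not signed-eliminable, and minimality forces it to be all of $G$. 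Therefore $G$ is exactly the monochromatic cycle $C$ (which is all of $G_+$, by chordlessness there are no positive chords) together with a set of negative chords; this set is nonempty, since otherwise $C$ is a $+$-cycle of length $k\ge 4$, violating $(C1')$. If $k=4$ then $G$ is a four-vertex graph, signed-eliminable by $(C2)$, a contradiction, so $k\ge 5$.

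Next I would pass to a shortest \emph{almost-monochromatic} chordless cycle $C_1=(u_0,\dots,u_d)$ of $G$, meaning $u_0u_1,\dots,u_{d-1}u_d$ are positive, $u_du_0$ is negative, and $C_1$ has no chords. Such a $C_1$ is produced from a negative chord $w_iw_j$ of $C$ whose shorter arc is as short as possible: that arc (of length $d$, with $2\le d\le\lfloor k/2\rfloor$) together with the chord is $C_1$, and chordlessness holds because a positive chord of $C_1$ would be a positive chord of $C$ while a negative chord would be a negative chord of $C$ with a strictly shorter arc. Now adjoin the $C$-predecessor $p$ of $u_0$ (which lies outside $V(C_1)$ since $k\ge5$). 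Minimality of $d$ shows that apart from the positive cycle edge $pu_0$, the only possible edge from $p$ into $V(C_1)$ is the negative chord $pu_d$: any negative edge $pu_\ell$ with $1\le\ell\le d-1$ would be a negative chord of $C$ of strictly smaller arc. This gives two cases. If $pu_d$ is a non-edge, then for $d\ge3$ the four vertices $\{p,u_0,u_1,u_d\}$ induce a star centred at $u_0$ with edge signs $+,+,-$, i.e.\ an induced four-vertex $(-\sigma)$-mountain, contradicting $(C3)$. If $pu_d$ is a negative edge, then for $d\ge3$ the vertices $\{p,u_0,u_{d-1},u_d\}$ induce a triangle with one positive and two negative edges carrying a positive pendant at the vertex meeting both negative edges, which is one of the graphs of Table~\ref{tbl:NonSignedEliminable}, contradicting $(C2)$. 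The value $d=3$ is in fact already excluded, as $C_1$ is then a four-cycle with three positive and one negative edge, again on the list of Table~\ref{tbl:NonSignedEliminable}. Finally $d=2$ is handled by the same device: according to whether $pu_1$ and $pu_2$ are edges one obtains four configurations on $\{p,u_0,u_1,u_2\}$, each of which is one of the ``triangle plus pendant'' or ``$K_4$ minus an edge'' (or $K_4$) graphs listed in Table~\ref{tbl:NonSignedEliminable}, contradicting $(C2)$.

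The routine part is the reduction via Theorem~\ref{thm:SignedEliminableCharacterization} and the easy direction; the main obstacle is the bookkeeping in the last paragraph, namely tracking exactly which chords of $C$ can survive the minimality of $d$ and then correctly recognizing the small induced subgraphs $\{p,u_0,u_1,u_d\}$, $\{p,u_0,u_{d-1},u_d\}$, and the $d=2$ variants as either induced mountains or members of the list in Table~\ref{tbl:NonSignedEliminable}. It is essential here that a chordless almost-monochromatic cycle of length $\ge5$ is \emph{itself} signed-eliminable, so one genuinely cannot avoid invoking $(C3)$: the mountain appears exactly in the case $d\ge3$, where $C_1$ alone forces nothing. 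This is precisely why the reformulation $(C1')$ is available only in conjunction with $(C2)$ and $(C3)$.
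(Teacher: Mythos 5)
Your overall architecture is the same as the paper's: retain $(C2)$ and $(C3)$, observe $(C1)\Rightarrow(C1')$ is immediate, and recover $(C1)$ from the remaining conditions by restricting to the vertex set of a chordless monochromatic cycle and exhibiting a forbidden induced four-vertex subgraph (the paper does this via a maximal monochromatic arc, you via a negative chord with minimal arc). But the key minimality step contains a genuine error. For a negative edge $pu_\ell$ with $1\le\ell\le d-1$, the shorter arc of this chord of $C$ has length $\ell+1$, which is strictly less than $d$ only when $\ell\le d-2$; for $\ell=d-1$ the shorter arc has length exactly $d$, so minimality of $d$ does \emph{not} exclude a negative chord $pu_{d-1}$. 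Consequently your assertion that the only possible edge from $p$ into $V(C_1)$ other than $pu_0$ is $pu_d$ is false, and in the subcase ``$d\ge3$, $pu_d$ negative'' the induced graph on $\{p,u_0,u_{d-1},u_d\}$ need not be the triangle-plus-pendant you identify: it may be $K_4$ minus the edge $u_0u_{d-1}$ with the extra negative edge $pu_{d-1}$, i.e.\ two triangles of sign pattern $(+,-,-)$ glued along the common negative edge $pu_d$. That graph happens to be (a sign swap of) one of the entries of Table~\ref{tbl:NonSignedEliminable}, so the contradiction with $(C2)$ survives, but you neither noticed nor checked this case, and the case analysis as written rests on a false claim.

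A second misstatement concerns where $(C3)$ enters. A chordless almost-monochromatic cycle of length at least five is \emph{not} signed-eliminable, contrary to your closing remark: the four vertices $u_{d-1},u_d,u_0,u_1$ induce a path with sign pattern $\sigma,-\sigma,\sigma$, which is the first entry of Table~\ref{tbl:NonSignedEliminable}. Together with the $d=3$ case (where $C_1$ itself is a four-cycle with signs $+,+,+,-$), this violates $(C2)$ for every $d\ge3$ before the vertex $p$ is ever introduced; only $d=2$ genuinely needs the analysis with $p$, and there $(C2)$ suffices as you note. So $(C3)$ is never needed for the implication $(C1')\wedge(C2)\Rightarrow(C1)$ --- exactly as in the paper, whose proof derives chordality from $(C1')$ and $(C2)$ alone and uses $(C3)$ only through Theorem~\ref{thm:SignedEliminableCharacterization}. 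In summary: the approach is sound and the gaps are repairable, but you must either correct the treatment of the possible chord $pu_{d-1}$ (verifying the extra configuration against Table~\ref{tbl:NonSignedEliminable}) or, more simply, replace the $d\ge3$ cases by the direct induced-path argument above, and the concluding claim about the essential role of $(C3)$ should be deleted.
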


\begin{proof}
Clearly $(C1)$ from Theorem~\ref{thm:SignedEliminableCharacterization} implies $(C1')$.  We show that $(C1')$ and $(C2)$ imply condition $(C1)$.  Assume for contradiction that $E^{\sigma}_G$, $\sigma\in\{-,+\}$, is a cycle of length $\ell+1>3$ and $V_G=\{v_0,\ldots,v_{\ell}\}$ where $\{v_i,v_{i+1}\}\in E^{\sigma}_G$ for $i=0,\ldots,\ell-1$ and $\{v_0,v_{\ell}\}\in E^{\sigma}_G$.  If $E^{-\sigma}_G=\emptyset$ then $G$ is a $\sigma$-cycle which is forbidden by $(C1')$, so we assume $E^{-\sigma}_G\neq\emptyset$.  Let $m$ be the maximal integer so that there is a sequence of consecutive vertices $v_{i},v_{i+1}\ldots,v_{i+m-1}$ so that the induced sub-graph on these consecutive vertices consists only of edges in $E^{\sigma}_G$.  Since $E^{-\sigma}_G\neq\emptyset$, $m<\ell+1$.  Relabel the vertices so that $v_0,\ldots,v_{m-1}$ are the vertices of a maximal induced sub-graph with edges only in $E^{\sigma}_G$.  If $m=2$ or $m=3$, then the induced sub-graph on $v_0,v_1,v_2,v_3$ consists of the three $\sigma$ edges $\{v_0,v_1\},\{v_1,v_2\},\{v_2,v_3\}$ along with at least one $-\sigma$ edge.  No such graph is signed eliminable (see Table~\ref{tbl:NonSignedEliminable}).  So $m\ge 4$.  Now consider the induced sub-graph $H$ on $v_0,v_1,\ldots,v_{m-1},v_{m}$.  By definition of $m$, $H$ has exactly one $-\sigma$ edge, namely $\{v_0,v_{m}\}$.  But then the induced sub-graph on $v_0,v_1,v_{m-1},v_m$ consists of the two $\sigma$ edges $\{v_0,v_1\},\{v_{m-1},v_m\}$ and the $-\sigma$ edge $\{v_0,v_m\}$, which is not signed-eliminable.  It follows that $E^\sigma_G$ cannot have a cycle of length $>3$, so $E^\sigma_G$ is chordal.
\end{proof}

The following proposition proves the implication (2)$\implies$(3) in Theorem~\ref{thm:1}, thereby completing the proof of Theorem~\ref{thm:1}.

\begin{prop}\label{prop:NotSignedEliminable}
Suppose $n_0,\ldots,n_{\ell}$ are non-negative integers, $G$ is a signed graph on $v_0,\ldots,v_\ell$, and let $\m$ be the multiplicity on $A_\ell$ given by $m_{ij}=n_i+n_j+m_G(ij)$.  If $G$ is not signed-eliminable, then there is a subset $U\subset\{0,\ldots,\ell\}$ so that $\dev(\m_U)>q_U\cdot (|U|-1)$.
\end{prop}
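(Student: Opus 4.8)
The plan is to locate a minimal obstruction to signed-eliminability inside $G$ by means of Corollary~\ref{cor:SignedEliminableCharacterization} and to verify the deviation inequality directly on its vertex set.  Extend the notation so that $m_G(ij)=0$ whenever $\{v_i,v_j\}\notin E_G$.  The key point is that $\dev(\m_U)$ and $q_U$ depend only on the induced signed graph $G|_U$, not on the integers $n_i$.  Indeed, since $m_{ij}=n_i+n_j+m_G(ij)$, the $n$-terms telescope in every alternating sum around a four-cycle: if $C\subset U$ traverses $i,j,s,t$ in order then $m_{ij}-m_{js}+m_{st}-m_{it}=m_G(ij)-m_G(js)+m_G(st)-m_G(it)$, so $\m(C)$, and hence $\dev(\m_U)$, is determined by $G|_U$.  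Likewise $m_{ij}+m_{ik}+m_{jk}=2(n_i+n_j+n_k)+\bigl(m_G(ij)+m_G(ik)+m_G(jk)\bigr)$ has the same parity as the number of edges of $G$ inside $\{v_i,v_j,v_k\}$ (each $m_G(e)$ is odd precisely when $e\in E_G$), so $q_U$ equals the number of triples in $U$ that meet $G$ in an odd number of edges.  Consequently we may replace $\m$ by any ANN multiplicity with signed graph $G$, and it suffices to exhibit a single subset $U$ with $|U|\ge 4$ and $\dev(\m_U)>q_U(|U|-1)$.

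Since $G$ is not signed-eliminable, Corollary~\ref{cor:SignedEliminableCharacterization} supplies an induced subgraph of $G$ that is (i) a $\sigma$-cycle on $N\ge 4$ vertices, (ii) a $\sigma$-mountain or $\sigma$-hill on $N\ge 4$ vertices, or (iii) one of the twelve non-signed-eliminable four-vertex graphs of Table~\ref{tbl:NonSignedEliminable}; take $U$ to be its vertex set, so $|U|=N\ge 4$.  In cases (i) and (ii) I evaluate $\dev(\m_U)$ by specializing the combinatorial identity~\eqref{eq:PMreduce} to the $N$ vertices of $U$ (replacing $\ell+1$ there by $N$) and substituting $m_G\in\{-1,0,1\}$; the three resulting sums, over edges, over disjoint pairs of edges, and over adjacent pairs of edges of $K_N$, simplify in each of the three families to give
\[
\dev(\m_U)=2\binom{N-2}{2}\sum_e m_G(e)^2+4\sum_{e\cap f=\emptyset} m_G(e)m_G(f)-2(N-3)\sum_{|e\cap f|=1} m_G(e)m_G(f)=N(N-2)(N-3).
\]
Counting the triples of $K_N$ that meet $G|_U$ in an odd number of edges gives $q_U=N(N-4)$ in all three families: for the $\sigma$-cycle this is $\binom N3$ less the $N$ cyclically consecutive triples and the $\tfrac16 N(N-4)(N-5)$ independent triples, and the $\sigma$-mountain and $\sigma$-hill are enumerated in the same spirit.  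Hence $\dev(\m_U)-q_U(|U|-1)=N(N-2)(N-3)-N(N-1)(N-4)=2N>0$, which is the required inequality.

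In case (iii) we have $|U|=4$ and $q_U\le 4$, so it suffices to verify $\dev(\m_U)>3q_U$ for each of the twelve graphs of Table~\ref{tbl:NonSignedEliminable}.  Writing $\dev(\m_U)=T_1^2+T_2^2+T_3^2$ for the three four-cycle alternating sums, which satisfy $T_3=T_1+T_2$ as in Lemma~\ref{lem:K4restrictions}, this is a routine finite computation: the four-cycle-shaped and star-shaped obstructions have $q_U=0$ and $\dev(\m_U)\ge 8$, while the remaining obstructions have $\dev(\m_U)\ge 14>12\ge 3q_U$; in every case $\dev(\m_U)>q_U(|U|-1)$.  The step demanding the most care is the uniform evaluation $\bigl(\dev(\m_U),q_U\bigr)=\bigl(N(N-2)(N-3),\,N(N-4)\bigr)$ for the three infinite families in cases (i)--(ii): the deviation is a mechanical specialization of~\eqref{eq:PMreduce}, but getting the count $q_U=N(N-4)$ right for the mountains and hills, and carrying out the twelve elementary four-vertex checks, is where the bookkeeping must be done attentively.
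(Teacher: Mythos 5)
Your proposal is correct and takes essentially the same route as the paper: the same reduction of $\dev(\m_U)$ and $q_U$ to the signed graph, the same appeal to Corollary~\ref{cor:SignedEliminableCharacterization}, the same finite check on the twelve four-vertex obstructions, and the same exact values for the three infinite families (your $N(N-2)(N-3)$ and $N(N-4)$ are the paper's $\ell^3-2\ell^2-\ell+2$ and $\ell^2-2\ell-3$ with $N=\ell+1$), leading to the same margin $2N>0$. The only difference is cosmetic: you evaluate the family formulas by specializing the expansion~\eqref{eq:PMreduce} and counting odd triples directly, whereas the paper decomposes into induced four-vertex subgraphs via its equations~\eqref{eq:P} and~\eqref{eq:q} and tabulates in the appendix; note that, like the paper, you leave the mountain and hill bookkeeping asserted rather than carried out in detail.
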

\begin{proof}
	Notice that, for a four-cycle traversing $i,j,s,t$ in order,
	\[
	\m(C)=|m_{ij}-m_{js}+m_{st}-m_{it}|=|m_G(ij)-m_G(js)+m_G(st)-m_G(it)|.
	\]
	Furthermore, for a three-cycle $\{i,j,k\}$,
	\[
	m_{ij}+m_{ik}+m_{jk}=2(n_i+n_j+n_k)+m_G(ij)+m_G(ik)+m_G(jk).
	\]
	It follows that the values of $\dev(\m)=\sum \m(C)^2$ and $q\ell=(\# \mbox{ odd three cycles})\cdot\ell$ from Theorem~\ref{thm:AnyBraidNonFree} may be determined after replacing $m_{ij}$ by $m_G(ij)$, which takes values only in $\{-1,0,1\}$.  Hereafter we write $\dev(G)$ for $\dev(\m)$ and $q_G$ for $q$ to emphasize their dependence only on the signed graph $G$.  If $U\subset\{v_0,\ldots,v_\ell\}$, we let $\dev(G_U)$ represent $\dev(\m_U)$ to emphasize dependence only on $G$ and the subset $U$.  As usualy, $q_U$ denotes the number of odd three cycles contained in $U$.
	
	Now, if $G$ is not signed eliminable then by Corollary~\ref{cor:SignedEliminableCharacterization} $G$ contains an induced sub-graph $H$ which is
	\begin{itemize}
		\item a signed graph on four vertices which is not signed-eliminable,
		\item a $\sigma$-cycle of length $>3$,
		\item a $\sigma$-hill,
		\item or a $\sigma$-mountain.
	\end{itemize}
	We assume $G=H$ and show that $DV(G)>q_G\ell$ in each of these cases, where $\ell$ is one less than the number of vertices of $G$.  The inequality $DV(G)>3q_G$ can easily be verified by hand for each of the twelve graphs on four vertices which are not signed-eliminable (see Table~\ref{tbl:NonSignedEliminable}); this is also done in~\cite[Corollary~6.2]{A3MultiBraid}.  If $G$ is a $\sigma$-cycle, $\sigma$-mountain, or $\sigma$-hill on $(\ell+1)$ vertices we will show that $\dev(G)$ and $q_G$ are given by the formulas:
	\begin{align}
	\label{eq:P1}
	\dev(G)= & \ell^3-2\ell^2-\ell+2\\
	\label{eq:q1}
	q_G= & \ell^2-2\ell-3.
	\end{align}
	Given these formulas, note that $\dev(G)=q\ell+2(\ell+1)>q\ell$, thus proving the result.  We prove Equations~\eqref{eq:P1} and~\eqref{eq:q1} for the $\sigma$-cycle directly, relying on the two additional formulas:
	\begin{align}
	\label{eq:P}
	\dev(G)=\sum_{U\subset V_G,|U|=4} \dev(G_U)\\ 
	\label{eq:q}
	q_G=(\sum_{U\subset V_G,|U|=4} q_U)/(\ell-2).
	\end{align}
	Equation~\eqref{eq:P} follows since each four-cycle is contained in a unique induced sub-graph on four vertices and Equation~\eqref{eq:q} follows since each three-cycle appears in $(\ell-2)$ sub-graphs on four vertices.  Using these equations, it suffices to identify all possible types of induced sub-graphs of the $\sigma$-cycle on four vertices, how many of each type there are, and compute $\dev(G_U)$ and $q_U$ for each of these.  Then we use Equation~\eqref{eq:P} to compute $\dev(G)$ and Equation~\eqref{eq:q} to compute $q_G$.  The list of all possible induced sub-graphs with four vertices of a $\sigma$-cycle on $(\ell+1)$ vertices are listed in Table~\ref{tbl:Cycle}.  The number of sub-graphs of each type is listed in the second column, while the third and fourth columns record $q_U$ and $\dev(G_U)$, respectively, for each type of sub-graph.  The final row records the total number of sub-graphs on four vertices, the number of odd three-cycles, and the deviation of $\m$, $\dev(\m)=\sum_{C\in C_4(K_{\ell+1})} \m(C)^2$.  We find that $\dev(\m)=\ell^3-2\ell^2-\ell+2$ and $q=\ell^2-2\ell-3$, proving Equations~\eqref{eq:P1} and~\eqref{eq:q1} for the $\sigma$-cycle.  The same computations can be done to prove Equations~\eqref{eq:P1} and~\eqref{eq:q1} for the $\sigma$-hill and $\sigma$-mountain; for the convenience of the reader we collect these in Appendix~\ref{app:1}. \qedhere	
\end{proof}

\begin{table}
	\renewcommand{\arraystretch}{3}
	\begin{tabular}{c|c|c|c}
		\multicolumn{4}{c}{
			$\sigma$-cycle of length $(\ell+1)$
		}
		\\
		\hline
		Type of sub-graph & Count & $q_U$ & $\dev(G_U)$ \\
		
		\hline
		
		\raisebox{-.5\height}{
			\begin{tikzpicture}[scale=1.0]
			\tikzstyle{dot}=[circle,fill=black,inner sep=1 pt];
			
			\node[dot] (1) {};
			\node[dot] (2)[left of=1]{};
			\node[dot] (3)[below of=2]{};
			\node[dot] (4)[below of=1]{};

			\end{tikzpicture}
		} 
		& 
		$\dbinom{\ell-4}{2}+\dbinom{\ell-3}{2}$ & $0$ & $0$ \\[10 pt]
		
		\hline
		
		\raisebox{-.5\height}{
			\begin{tikzpicture}[scale=1.0]
			\tikzstyle{dot}=[circle,fill=black,inner sep=1 pt];
			
			\node[dot] (1) {};
			\node[dot] (2)[left of=1]{};
			\node[dot] (3)[below of=2]{};
			\node[dot] (4)[below of=1]{};
			
			\draw (1)--(2);
			\end{tikzpicture}
		}
		&
		$(\ell+1)\dbinom{\ell-4}{2}$ & $2$ & $2$ \\[10 pt]
		
		\hline
		
		\raisebox{-.5\height}{
			\begin{tikzpicture}[scale=1.0]
			\tikzstyle{dot}=[circle,fill=black,inner sep=1 pt];
			
			\node[dot] (1) {};
			\node[dot] (2)[left of=1]{};
			\node[dot] (3)[below of=2]{};
			\node[dot] (4)[below of=1]{};
			
			\draw (1)--(2) (3)--(4);
			\end{tikzpicture}
		}
		&
		$\dfrac{(\ell+1)(\ell-4)}{2}$ & $4$ & $8$ \\[10 pt]
		
		\hline
		
		\raisebox{-.5\height}{
			\begin{tikzpicture}[scale=1.0]
			\tikzstyle{dot}=[circle,fill=black,inner sep=1 pt];
			
			\node[dot] (1) {};
			\node[dot] (2)[left of=1]{};
			\node[dot] (3)[below of=2]{};
			\node[dot] (4)[below of=1]{};
			
			\draw (1)--(2)--(3);
			\end{tikzpicture}
		}
		
		&
		$(\ell+1)(\ell-4)$ & $2$ & $2$ \\[10 pt]
		
		\hline
		
		\raisebox{-.5\height}{
			\begin{tikzpicture}[scale=1.0]
			\tikzstyle{dot}=[circle,fill=black,inner sep=1 pt];
			
			\node[dot] (1) {};
			\node[dot] (2)[left of=1]{};
			\node[dot] (3)[below of=2]{};
			\node[dot] (4)[below of=1]{};
			
			\draw (1)--(2)--(3)--(4);
			\end{tikzpicture}
		}
		&
		$\ell+1$ & $2$ & $6$\\[10 pt]
		
		\hline
		
		Total & $\dbinom{\ell+1}{4}$ & $q=\ell^2-2\ell-3$ & $\dev=\ell^3-2\ell^2-\ell+2$		
	\end{tabular}
	
	\caption{Computing $\dev(G)$ where $G$ is a $\sigma$-cycle}\label{tbl:Cycle}
\end{table}

\section{Free vertices and a conjecture}\label{sec:FreeVertex}
In this final section we discuss \textit{free vertices} of a multiplicity on a graphic arrangement and present a conjecture on the structure of free multiplicities on braid arrangements.  


\begin{defn}
Suppose $G$ is a graph. A vertex $v_i\in V_G$ is a \textit{simplicial vertex} if the sub-graph of $G$ induced by $v_i$ and its neighbors is a complete graph.  Given a multi-arrangement $(\A_G,\m)$ and the corresponding edge-labeled graph $(G,\m)$, a vertex $v_i$ is a \textit{free vertex} of $(G,\m)$  if it is a simplicial vertex and for every triangle with vertices $v_i,v_j,v_k$ we have $m_{ij}+m_{ik}\le m_{jk}+1$.
\end{defn}

\begin{thm}\label{thm:FreeVertex}
Suppose $G$ is a graph, $v_i$ is a free vertex of $(G,\m)$, and $G'$ is the induced sub-graph on the vertex set $V_G\setminus \{v_i\}$.  Then $(\A_G,\m)$ is free if and only if $(\A_{G'},\m_{G'})$ is free.
\end{thm}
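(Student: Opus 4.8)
The plan is to use the addition-deletion machinery for multi-arrangements together with the observation that deleting a free vertex corresponds to a controlled restriction. Let $v_i$ be a free vertex of $(G,\m)$ with neighbors $v_{j_1},\dots,v_{j_r}$ inducing a clique, and write $\m_i$ for the multiplicity $\m(H_{i,j_s})=m_{i,j_s}$ on the edges at $v_i$. The intuition is that, because $v_i$ is simplicial, the hyperplanes $H_{i,j_1},\dots,H_{i,j_r}$ together with the ``link'' hyperplanes $H_{j_s,j_t}$ form a sub-braid arrangement on the vertex set $\{v_i,v_{j_1},\dots,v_{j_r}\}$, and because of the inequality $m_{i,j_s}+m_{i,j_t}\le m_{j_s,j_t}+1$ this local multi-arrangement is an ANN multiplicity whose associated signed graph is automatically signed-eliminable (indeed $v_i$ can be eliminated first: the ANN conditions $\epsilon$ at $v_i$ are forced, and the triangle inequality is exactly condition $(2)$ of the ANN definition restricted to triangles through $v_i$). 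So the local picture at $v_i$ is free with predictable exponents.

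The key steps, in order, would be: \emph{(i)} Set up the relevant deletion. Let $\A'=\A_{G'}$ and let $\A''$ be the restriction of $\A_G$ to one of the hyperplanes at $v_i$, or better, argue by induction on $r$ by peeling off the edges $H_{i,j_s}$ one at a time using the multi-arrangement addition-deletion theorem of Abe (or the version in~\cite{EulerMult}). \emph{(ii)} For each such step, identify the restricted multi-arrangement: restricting to $H_{i,j_1}=V(x_i-x_{j_1})$ identifies $x_i$ with $x_{j_1}$, and the resulting multi-braid arrangement on the remaining vertices has a multiplicity that differs from $\m_{G'}$ only on edges touching $v_{j_1}$ among the other neighbors of $v_i$, with a controlled shift governed by the triangle inequalities. \emph{(iii)} Verify the numerical (Euler) condition that makes addition-deletion an ``if and only if'': this is where the inequality $m_{ij}+m_{ik}\le m_{jk}+1$ does its work, guaranteeing that the exponents of the rank-two sub-arrangements through $v_i$ are as balanced as Proposition~\ref{prop:A2exponents} predicts, so that the degree bookkeeping closes up. \emph{(iv)} Conclude that $(\A_G,\m)$ is free iff $(\A_{G'},\m_{G'})$ is free, since each addition-deletion step is an equivalence.

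Alternatively—and this may be cleaner—one can bypass explicit addition-deletion by combining Theorem~\ref{thm:1} with a direct check on deviations: if $\m$ lies in the balanced cone, then both statement (2) and the ANN characterization (3) are ``local'' in the sense that they only involve subsets $U$, and one shows that a free vertex $v_i$ contributes nothing new—every $U$ containing $v_i$ either has $\dev(\m_U)$ controlled by $\dev(\m_{U\setminus\{v_i\}})$ plus the guaranteed-small contribution of four-cycles through $v_i$ (which vanish or are bounded because $v_i$ is simplicial with the triangle inequality), or, at the level of signed graphs, $v_i$ can always be taken as the first vertex in a signed-elimination order. The catch is that Theorem~\ref{thm:1} only applies inside the balanced cone, whereas Theorem~\ref{thm:FreeVertex} is stated for arbitrary multiplicities on arbitrary graphs; so this route only handles the balanced-cone case and the general case genuinely needs the addition-deletion argument.

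\textbf{Main obstacle.} The hard part will be step \emph{(iii)}: checking the exponent/Euler condition precisely enough that the multi-arrangement addition-deletion theorem applies as an equivalence rather than just one implication. The subtlety is that restricting a multi-braid arrangement to a hyperplane does not in general stay within any nice class, and one must track how the multiplicity on the restriction compares to $\m_{G'}$ and verify that the restricted arrangement has the expected exponents—this is exactly the place where simpliciality of $v_i$ and the inequality $m_{ij}+m_{ik}\le m_{jk}+1$ are indispensable, and making that interaction rigorous (rather than just plausible) is the technical heart of the proof.
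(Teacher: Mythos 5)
Your plan never closes the step you yourself flag as the technical heart, and that step is a genuine gap rather than a routine verification. In the iterated form you propose, the Abe--Terao--Wakefield addition--deletion theorem requires the restricted arrangement to carry the \emph{Euler} multiplicity, and restricting the graphic multi-arrangement to $H_{ij_1}$ (merging $v_i$ with $v_{j_1}$) produces Euler multiplicities governed by the exponents of the rank-two flats through $H_{ij_1}$; for the triangles these are only known via Proposition~\ref{prop:A2exponents} under a balancedness hypothesis which you do not have on the link edges $m_{j_sj_t}$ (the free-vertex inequality constrains only the triangles through $v_i$). Moreover deletion lowers one multiplicity by one, so ``peeling off the edges at $v_i$'' means $\sum_s m_{ij_s}$ separate applications, each needing its own exponent bookkeeping, and none of these steps is automatically an equivalence: the addition--deletion theorem is a two-out-of-three statement with exponent hypotheses, so your step (iv) does not follow just from invoking it. Two further inaccuracies: the ``only if'' direction needs no addition--deletion at all, since $\A_{G'}$ is the localization $(\A_G)_X$ at a flat and Proposition~\ref{prop:FreeClosed} applies directly; and your intuition that the multi-arrangement on the closed neighborhood of $v_i$ is free with predictable exponents is false in general, because the multiplicities $m_{j_sj_t}$ on the link are arbitrary, so that local arrangement need be neither ANN nor free.

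The paper's actual proof sidesteps all of this by adding every hyperplane through $v_i$ at once, using a modular-coatom freeness criterion for multi-arrangements (Theorem~\ref{thm:SupersolvableMultiarrangements}, the analogue of \cite[Theorem~5.10]{EulerMult}). Since $v_i$ is simplicial, $X=\bigcap_{v_j,v_k\neq v_i}H_{jk}$ is a modular flat of rank $\ell-1$ with $(\A_G)_X=\A_{G'}$, and the hypotheses of that criterion are checked only on rank-two flats $Y=H_{ij}\cap H_{st}$: if the edges are disjoint then $\A_Y$ consists of just the two hyperplanes, and otherwise $Y$ corresponds to a triangle through $v_i$, where the free-vertex inequality $m_{ij}+m_{it}\le m_{jt}+1$ is precisely the required multiplicity condition. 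No Euler multiplicities or intermediate exponents ever need to be computed. Your balanced-cone alternative via Theorem~\ref{thm:1} is correctly discarded for the reason you give, but to make your main route work you would have to prove a statement of the strength of Theorem~\ref{thm:SupersolvableMultiarrangements} (or carry out the Euler-multiplicity computations in full), and that content is missing from the proposal.
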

\begin{proof}[Proof of Theorem~\ref{thm:FreeVertex}]
We use a result whose proof we omit since it is virtually identical to the proof of~\cite[Theorem~5.10]{EulerMult}.  Recall that a flat $X\in L(\A)$ is called \textit{modular} if $X+Y\in L(\A)$ for every $Y\in L(\A)$, where $X+Y$ is the linear span of $X,Y$ considered as linear sub-spaces of $V=\kk^\ell$.
	
\begin{thm}\label{thm:SupersolvableMultiarrangements}
Suppose $(\A,\m)$ is a central multi-arrangement of rank $\ell\ge 3$ and $X$ is a modular flat of rank $\ell-1$.  Suppose $(\A_X,\m_X)$ is free with exponents $(d_1,\ldots,d_{\ell-1},0)$ and for all $H\in\A\setminus\A_X$ and $H'\in\A_X$, set $Y:=H\cap H'$.  If one of the following two conditions is satisfied:
\begin{enumerate}
\item $\A_Y=H\cup H'$ or
\item $\m(H')\ge \sum\limits_{H\in\A\setminus\A'} \m(H)-1$.
\end{enumerate}
Then $(\A,\m)$ is free with exponents $(d_1,\ldots,d_{\ell-1},|\m|-|\m'|)$.
\end{thm}
	
Now suppose $G$ is a graph on $\ell+1$ vertices $\{v_0,\ldots,v_\ell\}$ and $\A_G$ is the associated graphic arrangement.  Further suppose that $v_i$ is a free vertex of $(G,\m)$, and $G'$ is the induced sub-graph on the vertex set $V_G\setminus \{v_i\}$.  Set $\m'=\m|_{G'}$.  By Proposition~\ref{prop:FreeClosed}, if $(\A_{G'},\m')$ is not free, then neither is $(\A_G,\m)$.
	
Suppose now that $(\A_{G'},\m')$ is free.  We show that $(\A_G,\m)$ is free using Theorem~\ref{thm:SupersolvableMultiarrangements}.  Write $H_{ij}=V(x_i-x_j)$.  Since $v_i$ is a simplicial vertex of $G$, the flat $X=\cap_{v_j,v_k\neq v_i} H_{jk}$ is modular and has rank $\ell-1$.  The sub-arrangement $(\A_G)_X$ is the graphic arrangement $\A_{G'}$.  Suppose $H=H_{ij}\in\A_G\setminus\A_{G'}$, $H'=H_{st}\in\A_{G'}$, and set $Y=H_{ij}\cap H_{st}$.  If $\{s,t\}\cap\{i,j\}=\emptyset$, then $\A_Y=H_{ij}\cup H_{st}$.  Otherwise, suppose $s=j$.  Since $v_i$ is a simplicial vertex, $\{i,t\}\in E_G$, so $\A_Y=H_{ij}\cup H_{it}\cup H_{jt}$.  Since $v_i$ is a free vertex, $m_{ij}+m_{it}\le m_{jt}+1$, which is condition (2) from Theorem~\ref{thm:SupersolvableMultiarrangements}.  Hence $(\A_G,\m)$ is free by Theorem~\ref{thm:SupersolvableMultiarrangements}.
\end{proof}

\begin{remark}
Theorem~\ref{thm:FreeVertex} can also be proved using homological techniques from~\cite{GSplinesGraphicArrangements}.
\end{remark}

We use Theorem~\ref{thm:FreeVertex} to inductively construct two types of free multiplicities.  Given a graph $G$, an \textit{elimination ordering} is an ordering $v_0,\ldots,v_\ell$ of the vertices $V_G$ so that $v_i$ is a simplicial vertex of the induced sub-graph on $v_0,\ldots,v_i$ for every $i=1,\ldots,\ell$.  It is known that $V_G$ admits an elimination ordering if and only if $G$ is chordal~\cite{Dirac}.

\begin{cor}\label{cor:chordalFree}
Suppose $(G,\m)$ is an edge-labeled chordal graph with elimination ordering $v_0,\ldots,v_\ell$ satisfying that $v_i$ is a free vertex of the induced sub-graph on $\{v_0,\ldots,v_i\}$ for every $i\ge 2$.  Then $(\A_G,\m)$ is free.
\end{cor}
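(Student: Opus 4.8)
The plan is a straightforward downward induction on the number of vertices $|V_G|=\ell+1$: at each stage I would delete the last vertex $v_\ell$ of the given elimination ordering and apply Theorem~\ref{thm:FreeVertex}.

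First I would reduce to the case that $G$ is connected. If $G_1,\dots,G_c$ are the connected components of $G$, then $(\A_G,\m)$ is the product of the multi-arrangements $(\A_{G_j},\m|_{G_j})$, and a product of multi-arrangements is free if and only if each factor is; moreover each $(G_j,\m|_{G_j})$, with the induced vertex ordering, again satisfies the hypotheses of the corollary, since a simplicial vertex of an induced sub-graph of $G$ that lies in $G_j$ remains simplicial in the corresponding induced sub-graph of $G_j$, and the free-vertex inequalities only constrain triangles, each of which lies inside a single component. So I may assume $G$ is connected, in which case $\mathrm{rank}(\A_G)=\ell$.

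The base case is $\ell\le 2$: here $\mathrm{rank}(\A_G)\le 2$, and every multi-arrangement of rank at most two is free (its module of logarithmic derivations is reflexive, hence free over a two-dimensional polynomial ring), so there is nothing to prove. For the inductive step, assume $\ell\ge 3$. Taking $i=\ell$ in the hypothesis shows that $v_\ell$ is a free vertex of $G$; let $G'$ be the induced sub-graph on $\{v_0,\dots,v_{\ell-1}\}$ and $\m'=\m|_{G'}$. I would then check that $(G',\m')$ with the truncated ordering $v_0,\dots,v_{\ell-1}$ still satisfies the hypotheses of the corollary: an induced sub-graph of a chordal graph is chordal, the truncated ordering is still an elimination ordering of $G'$, and for $2\le i\le \ell-1$ the induced sub-graph of $G'$ on $\{v_0,\dots,v_i\}$ is literally the same graph as the induced sub-graph of $G$ on $\{v_0,\dots,v_i\}$, so $v_i$ remains a free vertex there. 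By the inductive hypothesis $(\A_{G'},\m')$ is free, and since $\mathrm{rank}(\A_G)=\ell\ge 3$ Theorem~\ref{thm:FreeVertex} then yields that $(\A_G,\m)$ is free.

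I do not expect a genuinely difficult step: essentially all of the work is done by Theorem~\ref{thm:FreeVertex} (and, behind it, Theorem~\ref{thm:SupersolvableMultiarrangements}). The only things needing a little care are the reduction to connected $G$, the routine verification that chordality, the elimination ordering, and the free-vertex inequalities all descend to $G'$, and keeping the rank at least three when Theorem~\ref{thm:FreeVertex} is applied --- which is exactly what placing the base case at $\ell\le 2$ accomplishes.
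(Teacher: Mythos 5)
Your proof is correct and is essentially the argument the paper intends: the corollary is obtained by peeling off the last vertex of the elimination ordering and applying Theorem~\ref{thm:FreeVertex} inductively, with low-rank (hence free) cases as the base. Your extra reductions (to connected $G$ and keeping rank at least three) are harmless but not needed, since Theorem~\ref{thm:FreeVertex} as stated carries no rank or connectivity hypothesis.
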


\begin{cor}\label{cor:FreeMultiplicitiesBraid}
Let $(\A_\ell,\m)$ be a multi-braid arrangement corresponding to the complete graph $K_{\ell+1}$ on $(\ell+1)$ vertices.  Suppose that $K_{\ell+1}$ admits an ordering $\{v_0,\ldots,v_\ell\}$ so that:
\begin{enumerate}
\item For some integer $0\le k\le \ell$, the induced sub-graph $G'$ on $\{v_0,\ldots,v_k\}$ satisfies that $\m_{G'}$ is a free ANN multiplicity.
\item For $k+1\le i\le \ell$, $v_i$ is a free vertex of the induced graph on $\{v_0,\ldots,v_i\}$.
\end{enumerate}
Then $(A_\ell,\m)$ is free.
\end{cor}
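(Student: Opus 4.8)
The plan is to combine the Abe--Nuida--Numata freeness theorem for ANN multiplicities with an iterated application of Theorem~\ref{thm:FreeVertex}. First I would record the base case: the induced subgraph $G'$ on $\{v_0,\dots,v_k\}$ is the complete graph $K_{k+1}$, so $\A_{G'}$ is precisely the braid arrangement $A_k$. Since $\m_{G'}$ is a free ANN multiplicity by hypothesis~(1), the implication (3)$\implies$(1) of Theorem~\ref{thm:1} --- that is, \cite[Theorem~0.3]{AbeSignedEliminable} --- gives that $(\A_{G'},\m_{G'})$ is free.

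Next I would set up the induction on the number of added vertices. For $k\le i\le \ell$ write $G_i$ for the induced subgraph of $K_{\ell+1}$ on $\{v_0,\dots,v_i\}$, so $G_k=G'$ and $G_\ell=K_{\ell+1}$, and $\m_{G_i}$ denotes the restriction of $\m$ to the edges of $G_i$. Suppose inductively that $(\A_{G_{i-1}},\m_{G_{i-1}})$ is free for some $i$ with $k+1\le i\le \ell$. By hypothesis~(2) the vertex $v_i$ is a free vertex of $(G_i,\m_{G_i})$, and $G_{i-1}$ is exactly the induced subgraph of $G_i$ on $V_{G_i}\setminus\{v_i\}$; hence Theorem~\ref{thm:FreeVertex} applies and shows $(\A_{G_i},\m_{G_i})$ is free. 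Iterating this step from $i=k+1$ up to $i=\ell$ yields freeness of $(\A_{G_\ell},\m_{G_\ell})=(A_\ell,\m)$, which is the desired conclusion.

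Since the proof is a direct chain of previously established results, there is no substantial obstacle here; the corollary is essentially a bookkeeping consequence of Theorem~\ref{thm:1} and Theorem~\ref{thm:FreeVertex}. The only points requiring a remark are that in the complete graph $K_{\ell+1}$ every vertex $v_i$ of every induced subgraph is automatically simplicial (its neighbors again span a complete graph), so hypothesis~(2) is really just the requirement that $m_{ij}+m_{ik}\le m_{jk}+1$ for every triangle through $v_i$ in the definition of a free vertex, and that the restriction $\m_{G_i}$ used in the induction matches the notation $\m_{G'}$ appearing in the hypotheses of Theorem~\ref{thm:FreeVertex}. Both are immediate from the definitions.
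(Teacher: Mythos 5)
Your proof is correct and is exactly the argument the paper intends (the corollary is stated without an explicit proof, but the text indicates it follows by inductively applying Theorem~\ref{thm:FreeVertex} starting from the free ANN multiplicity on the induced $A_k$, which is precisely your chain of reasoning). No issues to flag.
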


We conjecture that all free multi-braid arrangements take the form of Corollary~\ref{cor:FreeMultiplicitiesBraid}.

\begin{conj}\label{conj:1}
The multi-braid arrangement $(A_\ell,\m)$ is free if and only if it is one of the multi-braid arrangements constructed in Corollary~\ref{cor:FreeMultiplicitiesBraid}.  Equivalently, by Theorem~\ref{thm:FreeVertex}, if $(A_\ell,\m)$ is free then either $\m$ is a free ANN multiplicity or $\m$ has a free vertex.  Using Theorem~\ref{thm:1}, this is equivalent to the following statement: if $\m$ is a free multiplicity which is not in the balanced cone of multiplicities, then $\m$ has a free vertex.
\end{conj}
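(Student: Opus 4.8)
The backward direction of Conjecture~\ref{conj:1} is exactly Corollary~\ref{cor:FreeMultiplicitiesBraid}, so the content is the forward direction, which by Theorem~\ref{thm:1} and Theorem~\ref{thm:FreeVertex} is equivalent to the sharpened statement: \emph{if $\m$ is a free multiplicity on $A_\ell$ that is not in the balanced cone, then $(K_{\ell+1},\m)$ has a free vertex.} The plan is first to record that this sharpened statement yields the whole conjecture, and then to attack it by induction on $\ell$. For the reduction, suppose the sharpened statement holds and $(A_\ell,\m)$ is free. If $\m$ lies in the balanced cone, then $\m$ is a free ANN multiplicity by Theorem~\ref{thm:1}, which is the case $k=\ell$ of Corollary~\ref{cor:FreeMultiplicitiesBraid}. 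Otherwise $\m$ has a free vertex, say $v_\ell$; by Theorem~\ref{thm:FreeVertex} the restriction $\m'=\m_{G'}$ to $G'=K_{\ell+1}\setminus\{v_\ell\}$ is again free, and by induction $\m'$ has the form of Corollary~\ref{cor:FreeMultiplicitiesBraid}. Re-attaching $v_\ell$ as the final vertex in the elimination order exhibits $\m$ in the required form. Thus the entire problem reduces to producing a single free vertex whenever $\m$ is free and unbalanced.

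To locate a free vertex I would proceed as follows. Non-balancedness supplies a triple $\{a,b,c\}$ and, after relabeling, an inequality $m_{ac}\ge m_{ab}+m_{bc}+2$; note that this is precisely the statement that the free-vertex inequality at $b$ \emph{does} hold, with room to spare, for the triangle $\{a,b,c\}$. When $\ell=2$ this already finishes the argument, since the vertex opposite the unique large edge is a free vertex, giving the base case of the induction. The goal is to promote this single instance to a vertex at which $m_{vj}+m_{vk}\le m_{jk}+1$ holds for \emph{every} pair $j,k$. I would take as candidate apex a vertex $v$ whose incident edges are collectively smallest, for instance the vertex $b$ opposite an edge $\{a,c\}$ of maximal multiplicity, and try to verify the free-vertex inequality on all triangles through $v$ by localizing: by Proposition~\ref{prop:FreeClosed} every rank-three closed sub-arrangement through $v$ is a free $A_3$ multiplicity, so the classification of free $A_3$ multiplicities in~\cite{A3MultiBraid} constrains the three edges at $v$ inside each such $K_4$. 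The hope is that maximality of $\{a,c\}$, propagated through these $A_3$ localizations, forces every edge incident to $v$ to remain small, making $v$ a free vertex.

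The main obstacle is that the quantitative tool developed here, the deviation inequalities coming from $\mathrm{LMP}(2)-\mathrm{GMP}(2)\ge 0$, is insufficient to force a free vertex and in fact points the wrong way. For a fixed edge-sum around a triangle the rank-two product $d_1^Xd_2^X$ is largest in the balanced case and \emph{decreases} as the triangle becomes unbalanced, the exponents moving from $(\lfloor m_{ijk}/2\rfloor,\lceil m_{ijk}/2\rceil)$ toward the split $(m_{ij}+m_{jk},\,m_{ik})$ governed by Wakamiko's general rank-two formula rather than the balanced case quoted in Proposition~\ref{prop:A2exponents}. Hence unbalancedness lowers $\mathrm{LMP}(2)$ and relaxes the necessary inequality of Theorem~\ref{thm:AnyBraidNonFree}, so the mixed-product obstruction cannot by itself detect the failure to have a free vertex. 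I therefore expect the crux to be a genuine \emph{converse} to the free-vertex addition result: one must show that when $\m$ is free and unbalanced, the supersolvable addition step of Theorem~\ref{thm:SupersolvableMultiarrangements} is not merely sufficient but forced at some vertex. Concretely, I would try to run a direct Hilbert-series or syzygy computation for $D(A_\ell,\m)$, or invoke a multi-arrangement division/deletion theorem in the spirit of~\cite{EulerMult}, to prove that freeness together with the presence of a suitably large edge compels condition~(2) of Theorem~\ref{thm:SupersolvableMultiarrangements} at a well-chosen $v$. Establishing this converse, which controls the full syzygy module rather than only the rank-two local data, is the step I expect to be hard and is presumably why the statement remains conjectural.
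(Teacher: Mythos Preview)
The paper does \emph{not} prove this statement: it is presented as an open conjecture, with the remark immediately following noting only that the $A_3$ case is established in~\cite{A3MultiBraid} and that the $A_4$ case has been checked computationally in many instances. Your reduction in the first paragraph is correct and is exactly the equivalence the paper records in the body of the conjecture itself; in particular, the reformulation ``if $\m$ is free and not in the balanced cone then $\m$ has a free vertex'' is stated verbatim there.

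Your second and third paragraphs are a reasonable outline of where one might look for a proof, and you correctly identify the obstruction: the deviation/mixed-product machinery of Section~\ref{sec:AnyBraidNonFree} relies on Proposition~\ref{prop:A2exponents}, which fails outside the balanced cone, and indeed unbalanced triangles lower $\mathrm{LMP}(2)$ rather than raise it, so Theorem~\ref{thm:AnyBraidNonFree} gives no leverage there. Your suggestion to leverage the full $A_3$ classification from~\cite{A3MultiBraid} via Proposition~\ref{prop:FreeClosed} is natural, but the paper does not pursue it, and there is no argument in the paper (or, to my knowledge, in the literature) showing that the local $A_3$ constraints assemble to force a global free vertex. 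So your final assessment is accurate: the converse to the supersolvable addition step is the missing ingredient, and it remains open.
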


\begin{remark}
Conjecture~\ref{conj:1} is proved for the $A_3$ braid arrangement in~\cite{A3MultiBraid}.  Using Macaulay2~\cite{M2}, we have verified Conjecture~\ref{conj:1} for many multiplicities on the $A_4$ arrangement.
\end{remark}

\section{Acknowledgements}
I would especially like to thank Jeff Mermin, Chris Francisco, and Jay Schweig for their collaboration on the analysis of free multiplicities on the $A_3$ braid arrangement.  The current work would not be possible without their help.  I am very grateful to Takuro Abe for freely corresponding and offering many suggestions.  Computations in Macaulay2~\cite{M2} were indispensable for this project.

\bibliography{GraphBib,SplinesBib}{}
\bibliographystyle{plain}

\newpage

\appendix

\section{Computations for mountains and hills}\label{app:1}

\renewcommand{\arraystretch}{3}
\begin{longtable}{c|c|c|c}
		\multicolumn{4}{c}{
			$\sigma$-mountain on $(\ell+1)$ vertices
		}
		\\
		\hline
		Type of subgraph & Count & $q_U$ & $\dev(G_U)$ \\
		
		\hline
		
		\raisebox{-.5\height}{
			\begin{tikzpicture}[scale=.8]
			\tikzstyle{dot}=[circle,fill=black,inner sep=1 pt];
			
			\node[dot] (1) {};
			\node[dot] (2)[left of=1]{};
			\node[dot] (3)[below of=2]{};
			\node[dot] (4)[below of=1]{};

			\end{tikzpicture}
		} 
		& 
		$\dbinom{\ell-3}{4}$ & $0$ & $0$ \\[10 pt]
		
		\hline
		
		\raisebox{-.5\height}{
			\begin{tikzpicture}[scale=.8]
			\tikzstyle{dot}=[circle,fill=black,inner sep=1 pt];
			
			\node[dot] (1) {};
			\node[dot] (2)[left of=1]{};
			\node[dot] (3)[below of=2]{};
			\node[dot] (4)[below of=1]{};
			
			\draw[double distance=1 pt] (1)--(2);
			\end{tikzpicture}
		}
		&
		$3\dbinom{\ell-3}{3}$ & $2$ & $2$ \\[10 pt]
		
		\hline
		
		\raisebox{-.5\height}{
			\begin{tikzpicture}[scale=.8]
			\tikzstyle{dot}=[circle,fill=black,inner sep=1 pt];
			
			\node[dot] (1) {};
			\node[dot] (2)[left of=1]{};
			\node[dot] (3)[below of=2]{};
			\node[dot] (4)[below of=1]{};
			
			\draw[double distance=1 pt] (1)--(2)--(3);
			\end{tikzpicture}
		}
		&
		$2\dbinom{\ell-3}{2}$ & $2$ & $2$ \\[10 pt]
		
		\hline
		
		\raisebox{-.5\height}{
			\begin{tikzpicture}[scale=.8]
			\tikzstyle{dot}=[circle,fill=black,inner sep=1 pt];
			
			\node[dot] (1) {};
			\node[dot] (2)[left of=1]{};
			\node[dot] (3)[below of=2]{};
			\node[dot] (4)[below of=1]{};
			
			\draw[double distance=1 pt] (1)--(2) (3)--(4);
			\end{tikzpicture}
		}
		&
		$2\ell-9+\dbinom{\ell-5}{2}$ & $4$ & $8$ \\[10 pt]
		
		\hline
		
		\raisebox{-.5\height}{
			\begin{tikzpicture}[scale=.8]
			\tikzstyle{dot}=[circle,fill=black,inner sep=1 pt];
			
			\node[dot] (1) {};
			\node[dot] (2)[left of=1]{};
			\node[dot] (3)[below of=2]{};
			\node[dot] (4)[below of=1]{};
			
			\draw[double distance=1 pt] (1)--(2)--(3)--(4);
			\end{tikzpicture}
		}
		
		&
		$(\ell-3)$ & $2$ & $6$ \\[10 pt]
		
		\hline
		
		\raisebox{-.5\height}{
			\begin{tikzpicture}[scale=.8]
			\tikzstyle{dot}=[circle,fill=black,inner sep=1 pt];
			
			\node[dot] (1) {};
			\node[dot] (2)[left of=1]{};
			\node[dot] (3)[below of=2]{};
			\node[dot] (4)[below of=1]{};
			
			\draw (1)--(2);
			\end{tikzpicture}
		}
		&
		$\ell-4$ & $2$ & $2$\\[10 pt]
		
		\hline
		
		\raisebox{-.5\height}{
			\begin{tikzpicture}[scale=.8]
			\tikzstyle{dot}=[circle,fill=black,inner sep=1 pt];
			
			\node[dot] (1) {};
			\node[dot] (2)[left of=1]{};
			\node[dot] (3)[below of=2]{};
			\node[dot] (4)[below of=1]{};
			
			\draw (1)--(2);
			\draw[double distance=1 pt] (2)--(3);
			\end{tikzpicture}
		}
		&
		$2$ & $2$ & $6$\\[10 pt]
		
		\hline
		
		\raisebox{-.5\height}{
			\begin{tikzpicture}[scale=.8]
			\tikzstyle{dot}=[circle,fill=black,inner sep=1 pt];
			
			\node[dot] (1) {};
			\node[dot] (2)[left of=1]{};
			\node[dot] (3)[below of=2]{};
			\node[dot] (4)[below of=1]{};
			
			\draw (1)--(2)--(3);
			\end{tikzpicture}
		}
		&
		$2\dbinom{\ell-4}{2}$ & $2$ & $2$\\[10 pt]
		
		\hline
		
		\raisebox{-.5\height}{
			\begin{tikzpicture}[scale=.8]
			\tikzstyle{dot}=[circle,fill=black,inner sep=1 pt];
			
			\node[dot] (1) {};
			\node[dot] (2)[left of=1]{};
			\node[dot] (3)[below of=2]{};
			\node[dot] (4)[below of=1]{};
			
			\draw (1)--(2)--(3);
			\draw[double distance=1 pt] (1)--(3);
			\end{tikzpicture}
		}
		&
		$2(\ell-4)$ & $4$ & $8$\\[10 pt]
		
		\hline
		
		\raisebox{-.5\height}{
			\begin{tikzpicture}[scale=.8]
			\tikzstyle{dot}=[circle,fill=black,inner sep=1 pt];
			
			\node[dot] (1) {};
			\node[dot] (2)[left of=1]{};
			\node[dot] (3)[below of=2]{};
			\node[dot] (4)[below of=1]{};
			
			\draw (1)--(2)--(3);
			\draw[double distance=1 pt] (3)--(4);
			\end{tikzpicture}
		}
		&
		$2(\ell-4)$ & $2$ & $2$\\[10 pt]
		
		\hline
		
		\raisebox{-.5\height}{
			\begin{tikzpicture}[scale=.8]
			\tikzstyle{dot}=[circle,fill=black,inner sep=1 pt];
			
			\node[dot] (1) {};
			\node[dot] (2)[left of=1]{};
			\node[dot] (3)[below of=2]{};
			\node[dot] (4)[below of=1]{};
			
			\draw (1)--(2)--(3);
			\draw[double distance=1 pt] (1)--(3)--(4);
			\end{tikzpicture}
		}
		&
		$2$ & $2$ & $6$\\[10 pt]
		
		\hline
		
		\raisebox{-.5\height}{
			\begin{tikzpicture}[scale=.8]
			\tikzstyle{dot}=[circle,fill=black,inner sep=1 pt];
			
			\node[dot] (1) {};
			\node[dot] (2)[left of=1]{};
			\node[dot] (3)[below of=2]{};
			\node[dot] (4)[below of=1]{};
			
			\draw (1)--(2)--(3) (2)--(4);
			\end{tikzpicture}
		}
		&
		$\dbinom{\ell-4}{3}$ & $0$ & $0$\\[10 pt]
		
		\hline
		
		\raisebox{-.5\height}{
			\begin{tikzpicture}[scale=.8]
			\tikzstyle{dot}=[circle,fill=black,inner sep=1 pt];
			
			\node[dot] (1) {};
			\node[dot] (2)[left of=1]{};
			\node[dot] (3)[below of=2]{};
			\node[dot] (4)[below of=1]{};
			
			\draw (1)--(2)--(3) (2)--(4);
			\draw[double distance=1 pt] (3)--(4);
			\end{tikzpicture}
		}
		&
		$2\dbinom{\ell-4}{2}$ & $2$ & $2$\\[10 pt]
		
		\hline
		
		\raisebox{-.5\height}{
			\begin{tikzpicture}[scale=.8]
			\tikzstyle{dot}=[circle,fill=black,inner sep=1 pt];
			
			\node[dot] (1) {};
			\node[dot] (2)[left of=1]{};
			\node[dot] (3)[below of=2]{};
			\node[dot] (4)[below of=1]{};
			
			\draw (1)--(2)--(3) (2)--(4);
			\draw[double distance=1 pt] (3)--(4)--(1);
			\end{tikzpicture}
		}
		&
		$\ell-4$ & $2$ & $2$\\[10 pt]
		
		\hline
		
		Total & $\dbinom{\ell+1}{4}$ & $q_G=\ell^2-2\ell-3$ & $\dev=\ell^3-2\ell^2-\ell+2$\\	

	\caption{Computing $\dev(G)$ where $G$ is a $\sigma$-mountain}\label{tbl:mountain}

	\end{longtable}
	
	\renewcommand{\arraystretch}{3}
	\begin{longtable}{c|c|c|c}
		\multicolumn{4}{c}{
			$\sigma$-hill on $(\ell+1)$ vertices
		}
		\\
		\hline
		Type of subgraph & Count & $q_U$ & $\dev(G_U)$ \\
		
		\hline
		
		\raisebox{-.5\height}{
			\begin{tikzpicture}[scale=.8]
			\tikzstyle{dot}=[circle,fill=black,inner sep=1 pt];
			
			\node[dot] (1) {};
			\node[dot] (2)[left of=1]{};
			\node[dot] (3)[below of=2]{};
			\node[dot] (4)[below of=1]{};

			\end{tikzpicture}
		} 
		& 
		$\dbinom{\ell-4}{4}$ & $0$ & $0$ \\[10 pt]
		
		\hline
		
		\raisebox{-.5\height}{
			\begin{tikzpicture}[scale=.8]
			\tikzstyle{dot}=[circle,fill=black,inner sep=1 pt];
			
			\node[dot] (1) {};
			\node[dot] (2)[left of=1]{};
			\node[dot] (3)[below of=2]{};
			\node[dot] (4)[below of=1]{};
			
			\draw[double distance=1 pt] (1)--(2);
			\end{tikzpicture}
		}
		&
		$3\dbinom{\ell-4}{3}$ & $2$ & $2$ \\[10 pt]
		
		\hline
		
		\raisebox{-.5\height}{
			\begin{tikzpicture}[scale=.8]
			\tikzstyle{dot}=[circle,fill=black,inner sep=1 pt];
			
			\node[dot] (1) {};
			\node[dot] (2)[left of=1]{};
			\node[dot] (3)[below of=2]{};
			\node[dot] (4)[below of=1]{};
			
			\draw[double distance=1 pt] (1)--(2)--(3);
			\end{tikzpicture}
		}
		&
		$2\dbinom{\ell-4}{2}$ & $2$ & $2$ \\[10 pt]
		
		\hline
		
		\raisebox{-.5\height}{
			\begin{tikzpicture}[scale=.8]
			\tikzstyle{dot}=[circle,fill=black,inner sep=1 pt];
			
			\node[dot] (1) {};
			\node[dot] (2)[left of=1]{};
			\node[dot] (3)[below of=2]{};
			\node[dot] (4)[below of=1]{};
			
			\draw[double distance=1 pt] (1)--(2) (3)--(4);
			\end{tikzpicture}
		}
		&
		$2\ell-11+\dbinom{\ell-6}{2}$ & $4$ & $8$ \\[10 pt]
		
		\hline
		
		\raisebox{-.5\height}{
			\begin{tikzpicture}[scale=.8]
			\tikzstyle{dot}=[circle,fill=black,inner sep=1 pt];
			
			\node[dot] (1) {};
			\node[dot] (2)[left of=1]{};
			\node[dot] (3)[below of=2]{};
			\node[dot] (4)[below of=1]{};
			
			\draw[double distance=1 pt] (1)--(2)--(3)--(4);
			\end{tikzpicture}
		}
		
		&
		$\ell-4$ & $2$ & $6$ \\[10 pt]
		
		\hline
		
		\raisebox{-.5\height}{
			\begin{tikzpicture}[scale=.8]
			\tikzstyle{dot}=[circle,fill=black,inner sep=1 pt];
			
			\node[dot] (1) {};
			\node[dot] (2)[left of=1]{};
			\node[dot] (3)[below of=2]{};
			\node[dot] (4)[below of=1]{};
			
			\draw (1)--(2)--(3);
			\end{tikzpicture}
		}
		&
		$2\dbinom{\ell-4}{2}$ & $2$ & $2$\\[10 pt]
		
		\hline
		
		\raisebox{-.5\height}{
			\begin{tikzpicture}[scale=.8]
			\tikzstyle{dot}=[circle,fill=black,inner sep=1 pt];
			
			\node[dot] (1) {};
			\node[dot] (2)[left of=1]{};
			\node[dot] (3)[below of=2]{};
			\node[dot] (4)[below of=1]{};
			
			\draw (1)--(2)--(3);
			\draw[double distance=1 pt] (1)--(3);
			\end{tikzpicture}
		}
		&
		$2(\ell-4)$ & $4$ & $8$\\[10 pt]
		
		\hline
		
		\raisebox{-.5\height}{
			\begin{tikzpicture}[scale=.8]
			\tikzstyle{dot}=[circle,fill=black,inner sep=1 pt];
			
			\node[dot] (1) {};
			\node[dot] (2)[left of=1]{};
			\node[dot] (3)[below of=2]{};
			\node[dot] (4)[below of=1]{};
			
			\draw (1)--(2)--(3);
			\draw[double distance=1 pt] (3)--(4);
			\end{tikzpicture}
		}
		&
		$2(\ell-4)$ & $2$ & $2$\\[10 pt]
		
		\hline
		
		\raisebox{-.5\height}{
			\begin{tikzpicture}[scale=.8]
			\tikzstyle{dot}=[circle,fill=black,inner sep=1 pt];
			
			\node[dot] (1) {};
			\node[dot] (2)[left of=1]{};
			\node[dot] (3)[below of=2]{};
			\node[dot] (4)[below of=1]{};
			
			\draw (1)--(2)--(3);
			\draw[double distance=1 pt] (1)--(3)--(4);
			\end{tikzpicture}
		}
		&
		$2$ & $2$ & $6$\\[10 pt]
		
		\hline
		
		\raisebox{-.5\height}{
			\begin{tikzpicture}[scale=.8]
			\tikzstyle{dot}=[circle,fill=black,inner sep=1 pt];
			
			\node[dot] (1) {};
			\node[dot] (2)[left of=1]{};
			\node[dot] (3)[below of=2]{};
			\node[dot] (4)[below of=1]{};
			
			\draw (1)--(2)--(3) (2)--(4);
			\end{tikzpicture}
		}
		&
		$2\dbinom{\ell-4}{3}$ & $0$ & $0$\\[10 pt]
		
		\hline
		
		\raisebox{-.5\height}{
			\begin{tikzpicture}[scale=.8]
			\tikzstyle{dot}=[circle,fill=black,inner sep=1 pt];
			
			\node[dot] (1) {};
			\node[dot] (2)[left of=1]{};
			\node[dot] (3)[below of=2]{};
			\node[dot] (4)[below of=1]{};
			
			\draw (1)--(2)--(3) (2)--(4);
			\draw[double distance=1 pt] (3)--(4);
			\end{tikzpicture}
		}
		&
		$4\dbinom{\ell-4}{2}$ & $2$ & $2$\\[10 pt]
		
		\hline
		
		\raisebox{-.5\height}{
			\begin{tikzpicture}[scale=.8]
			\tikzstyle{dot}=[circle,fill=black,inner sep=1 pt];
			
			\node[dot] (1) {};
			\node[dot] (2)[left of=1]{};
			\node[dot] (3)[below of=2]{};
			\node[dot] (4)[below of=1]{};
			
			\draw (1)--(2)--(3) (2)--(4);
			\draw[double distance=1 pt] (3)--(4)--(1);
			\end{tikzpicture}
		}
		&
		$2(\ell-4)$ & $2$ & $2$\\[10 pt]
		
		\hline
		
		\raisebox{-.5\height}{
			\begin{tikzpicture}[scale=.8]
			\tikzstyle{dot}=[circle,fill=black,inner sep=1 pt];
			
			\node[dot] (1) {};
			\node[dot] (2)[left of=1]{};
			\node[dot] (3)[below of=2]{};
			\node[dot] (4)[below of=1]{};
			
			\draw (1)--(2)--(3)--(4);
			\end{tikzpicture}
		}
		&
		$1$ & $2$ & $6$\\[10 pt]
		
		\hline
		
		\raisebox{-.5\height}{
			\begin{tikzpicture}[scale=.8]
			\tikzstyle{dot}=[circle,fill=black,inner sep=1 pt];
			
			\node[dot] (1) {};
			\node[dot] (2)[left of=1]{};
			\node[dot] (3)[below of=2]{};
			\node[dot] (4)[below of=1]{};
			
			\draw (1)--(2)--(3)--(1) (2)--(4);
			\end{tikzpicture}
		}
		&
		$2(\ell-4)$ & $2$ & $2$\\[10 pt]
		
		\hline
		
		\raisebox{-.5\height}{
			\begin{tikzpicture}[scale=.8]
			\tikzstyle{dot}=[circle,fill=black,inner sep=1 pt];
			
			\node[dot] (1) {};
			\node[dot] (2)[left of=1]{};
			\node[dot] (3)[below of=2]{};
			\node[dot] (4)[below of=1]{};
			
			\draw (1)--(2)--(3)--(1) (2)--(4);
			\draw[double distance=1 pt] (3)--(4);
			\end{tikzpicture}
		}
		&
		$2$ & $2$ & $6$\\[10 pt]
		
		\hline
		
		\raisebox{-.5\height}{
			\begin{tikzpicture}[scale=.8]
			\tikzstyle{dot}=[circle,fill=black,inner sep=1 pt];
			
			\node[dot] (1) {};
			\node[dot] (2)[left of=1]{};
			\node[dot] (3)[below of=2]{};
			\node[dot] (4)[below of=1]{};
			
			\draw (1)--(2)--(3)--(1) (2)--(4)--(3);
			\end{tikzpicture}
		}
		&
		$\dbinom{\ell-4}{2}$ & $2$ & $2$\\[10 pt]
		
		\hline
		
		\raisebox{-.5\height}{
			\begin{tikzpicture}[scale=.8]
			\tikzstyle{dot}=[circle,fill=black,inner sep=1 pt];
			
			\node[dot] (1) {};
			\node[dot] (2)[left of=1]{};
			\node[dot] (3)[below of=2]{};
			\node[dot] (4)[below of=1]{};
			
			\draw (1)--(2)--(3)--(1) (2)--(4)--(3);
			\draw[double distance=1 pt] (4)--(1);
			\end{tikzpicture}
		}
		&
		$\ell-4$ & $4$ & $8$\\[10 pt]
		
		\hline
		
		Total & $\dbinom{\ell+1}{4}$ & $q_G=\ell^2-2\ell-3$ & $\dev=\ell^3-2\ell^2-\ell+2$\\	
		
		\caption{Computing $\dev(G)$ where $G$ is a $\sigma$-hill}\label{tbl:hill}
		
	\end{longtable}
	
\end{document}